\newcommand{\ve}{\varepsilon}
\newcommand{\wt}{\widetilde}
\newcommand{\supp}{\mathrm{supp}}
\newcommand{\sech}{\mathop{\operator@font sech}}
\newcommand{\sign}{\mathop{\operator@font sign}}
\newtheorem{lemma}{Lemma}[section]
\newtheorem{theorem}{Theorem}[section]
\newtheorem{proposition}{Proposition}[section]
\acrodef{Serre}{Serre}
\acrodef{SGN}{Serre-Green-Naghdi}
\acrodef{cB}{`classical' Boussinesq}
\acrodef{FEM}{Finite Element Method}
\acrodef{IBVP}{initial-boundary value problem}
\acrodef{RK}{Runge-Kutta}
\acrodef{ODEs}{ordinary differential equations}
\acrodef{BT}{Boussinesq type}
\begin{document}

\title[Error estimates for Galerkin approximations of the Serre equations]{Error estimates for Galerkin approximations of the Serre equations}

\author{Dimitrios Antonopoulos}
\address{Mathematics Department, National and Kapodistrian University of Athens, 
15784 Zographou, Greece and Institute of Applied and Computational Mathematics, FORTH, 70013 Heraklion, Greece}
\email{antonod@math.uoa.gr}

\author{Vassilios Dougalis}
\address{Mathematics Department, National and Kapodistrian University of Athens, 
15784 Zographou, Greece and Institute of Applied and Computational Mathematics, FORTH, 70013 Heraklion, Greece}
\email{doug@math.uoa.gr}

\author{Dimitrios Mitsotakis}
\address{Victoria University of Wellington, School of Mathematics and Statistics, Wellington 6140, New Zealand}
\email{dimitrios.mitsotakis@vuw.ac.nz}
\urladdr{http://dmitsot.googlepages.com/}

\subjclass[2010]{65M60, 35Q53}

\keywords{Surface water waves, Serre equations, error estimates, standard
Galerkin finite element methods, solitary waves}

\begin{abstract}
We consider the Serre system of equations which is a nonlinear dispersive system that models
two-way propagation of long waves of not necessarily small amplitude on the surface of an 
ideal fluid in a channel. We discretize in space the periodic initial-value problem for the 
system using the standard Galerkin finite element method with smooth splines on a uniform
mesh and prove an optimal-order $L^{2}$-error estimate for the resulting semidiscrete 
approximation. Using the fourth-order accurate, explicit, `classical' Runge-Kutta scheme
for time stepping we construct a highly accurate fully discrete scheme in order to
approximate solutions of the system, in particular solitary-wave solutions, and study
numerically phenomena such as the resolution of general initial profiles into sequences of
solitary waves, and overtaking collisions of pairs of solitary waves propagating in the same
direction with different speeds. 
\end{abstract}

\maketitle


\section{Introduction}
In this paper we will analyze standard Galerkin-finite element approximations to the periodic initial-value problem for the
system of {\em Serre equations}. The system consists of two pde's, approximates the two-dimensional 
Euler equations of water-wave theory, and models two-way propagation  of long waves on the surface of
an ideal fluid in a uniform horizontal channel of finite depth $h_{0}$. Specifically, if $\varepsilon=a/h_{0}$, where
$a$ is a typical wave amplitude, and $\sigma=h_{0}/\lambda$, where $\lambda$ is a typical wavelength, the
system is valid when $\sigma\ll 1$ and is written in nondimensional, scaled variables in the form:
\begin{eqnarray}
& & \zeta_{t} + (\eta u)_{x} =0, \label{eq11}\\
& & u_{t} + \zeta_{x} + \ve uu_{x} - \frac{\sigma^{2}}{3\eta} \bigl[ \eta^{3}(u_{xt} + \ve uu_{xx} -\ve u_{x}^{2})\bigr]_{x}=0. \label{eq12}
\end{eqnarray}
Here $x$ and $t$ are proportional to position along the channel and time, respectively, 
$\ve\zeta$, where $\zeta=\zeta(x,t)$, is the
elevation of the free surface above a level of rest at height $y=0$ of the vertical axis, $\eta=1+\ve \zeta$, assumed
to be positive, is the water depth (as the horizontal bottom in these variables is located at $y=-1$), and $u=u(x,t)$ is
the vertically averaged horizontal velocity of the fluid. (For $\ve=O(1)$ the left-hand side of (\ref{eq12}) is an $O(\sigma^{4})$
asymptotic approximation derived from the equation of conservation of momentum in the $x$ direction of the 2D-Euler
equations; (\ref{eq11}) is exact.)\par
The system (\ref{eq11})-(\ref{eq12}) was first derived by Serre, \cite{S}, and subsequently rederived by Su and Gardner, 
\cite{SG}, by Green {\em et al.}, and Green and Naghdi, \cite{GN1}, \cite{GN2}, (who extended it to the case of two spatial variables and
variable bottom), and others. It is also known as {\em Green -- Naghdi} or {\em  fully nonlinear Boussinesq system}.
For its formal derivation from the Euler equations and the derivation of related systems, cf. \cite{LB};  regarding its rigorous
justification as an approximation of the Euler equations we refer the reader to the recent monograph by Lannes, \cite{L},
and its references. \par
In case one considers long waves of {\em small amplitude}, specifically in the {\em Boussinesq} regime
$\ve=O(\sigma^{2})$, $\sigma \ll 1$, it is straightforward to see that the Serre system becomes
\begin{eqnarray}
& & \zeta_{t} + (\eta u)_{x} =0, \nonumber \\
& & u_{t} + \zeta_{x} + \ve uu_{x} - \frac{\sigma^{2}}{3} u_{xxt}=O(\sigma^{4}), \nonumber
\end{eqnarray}
i.e. reduces (if the right-hand side of the second equation is replaced by zero), to the `classical' Boussinesq system,
\cite{W}, which has a linear dispersive term in contrast to the nonlinear dispersive terms present in (\ref{eq12}).
(If the dispersive terms are omitted altogether, the system reduces to the shallow water equations.) Since
it is valid for $\ve=O(1)$, the Serre system, when written in its variable-bottom topography form, has been
found suitable for the description of nonlinear dispersive waves even of larger amplitude, such as water waves
in the near-shore zone before they break. \par
The Cauchy problem for the Serre system in nondimensional variables, that we still denote by $x$, $t$, $u$, 
$\eta=1+\zeta$, is written for $x\in \mathbb{R}$, $t\geq 0$ as
\begin{eqnarray}
& & \eta_{t} + (\eta u)_{x} =0, \label{eq13}\\
& & u_{t} + \eta_{x} + uu_{x} - \frac{1}{3\eta} \bigl[ \eta^{3}(u_{xt} + uu_{xx} - u_{x}^{2})\bigr]_{x}=0, \label{eq14}
\end{eqnarray}
with given initial conditions 
\begin{equation}
\eta(x,0)=\eta_{0}(x), \quad u(x,0)=u_{0}(x), \quad x\in \mathbb{R}. \label{eq15}
\end{equation}
In \cite{Li2} Li proved that the initial-value problem (\ref{eq13})-(\ref{eq15}) is well posed locally in time for 
$(\eta,u)\in H^{s}\times H^{s+1}$, for $s > 3/2$, provided $\min_{x\in \mathbb{R}}\eta_{0}(x)>0$, and that the
property $\min_{x\in \mathbb{R}}\eta(x,t)>0$ is preserved while the solution exists. 
(Here $H^{s}=H^{s}(\mathbb{R})$, for $s$
real, is the subspace of $L^{2}(\mathbb{R})$ consisting of (classes of) functions $f$ for which 
$\int_{-\infty}^{\infty}(1+\xi^{2})^{s} |\hat{f}(\xi)|^{2}d\xi<\infty$, 
where $\hat{f}$ is the Fourier transform of $f$.) Li also provided a rigorous justification for the Serre equations as an
approximation of the Euler equations. Local well-posedness of the system in 1D in its variable bottom formulation 
was proved in \cite{I}. For results on the well-posedness and justification of the general 2D Green--Naghdi equations 
with bottom topography, we refer the reader to \cite{L} and its references. It should be noted that local temporal
existence of the Cauchy problem for the scaled equations (\ref{eq11})-(\ref{eq12}) may be established in intervals of the form $[0,T_{\ve}]$,
where $T_{\ve}=O(1/\ve)$.\par
It is not hard to see, cf. \cite{GN1}, \cite{Li1}, that suitably smooth and decaying solutions of (\ref{eq13})-(\ref{eq15})
preserve, over their temporal interval of existence, the mass $\int_{-\infty}^{\infty}\eta dx$, momentum
$\int_{-\infty}^{\infty}\eta udx$, and energy integrals. The latter invariant (Hamiltonian) is given by 
\begin{equation}
E = \frac{1}{2}\int_{-\infty}^{\infty} \bigl[ \eta u^{2} + \frac{1}{3}\eta^{3}u_{x}^{2} + (\eta - 1)^{2}\bigr] dx. \label{eq16}
\end{equation}
In addition, as Serre had already noted in the second part of his paper, \cite{S}, the system (\ref{eq13})-(\ref{eq14}) 
possesses solitary-wave solutions and a family of periodic (cnoidal) travelling wave solutions; cf. also \cite{CC} for the 
latter. Closed-form formulas are known for both of these families of solutions.\par
In recent years many papers dealing with the numerical solution of the Serre system and its enhanced 
dispersion and variable bottom topography variants have appeared. In these works the reader may find, among other, numerical studies
of the generation, propagation, and interaction of solitary and cnoidal waves, of the interaction of waves with boundaries,
and of the effects of bottom topography on the propagation of the waves. The numerical methods used include spectral 
schemes, cf. e.g. \cite{LHC}, \cite{DCMM}, finite difference and finite volume methods, 
cf. e.g. the early paper \cite{MS}, and \cite{SRT}, \cite{B}, \cite{BCLMT}, 
\cite{BBCCLMT} and its references, \cite{C}, \cite{DCMM}, standard Galerkin methods, cf. e.g.
 \cite{MID}, \cite{MDC}, 
{\em et al}.  In some of these papers the results of numerical simulations with the Serre 
systems have been compared with experimental data and also with numerical solutions of the 
Euler equations. These comparisons bear out the 
effectiveness of the Serre systems in approximating the Euler equations in  a variety of 
variable-bottom-topography test problems, cf.e.g. \cite{SRT},\cite{B},\cite{C}, 
especially when the equations are solved with hybrid
numerical techniques, wherein the advective terms of the equations are discretized by 
shock-capturing techniques while the dispersive terms are treated e.g. by finite differences,
cf. e.g. \cite{BBCCLMT}, \cite{BCLMT}.   
\par  
In the paper at hand we consider the periodic initial-value problem for the Serre 
equations (\ref{eq13})-(\ref{eq14}) with periodic initial data on the spatial interval 
$[0,1]$, assuming  that it has smooth solutions over a temporal
interval $[0,T]$ that satisfy $\min_{(x,t)\in [0,1]\times[0,T]}\eta(x,t)>0$. \par
In section 2 we discretize the problem in space by the standard Galerkin method using the 
smooth periodic splines of order $r\geq 3$ (i.e. piecewise polynomials of degree $r-1\geq 2$) 
on a uniform mesh of meshlength $h$. We compare the Galerkin semidiscrete approximation with 
a suitable spline quasiinterpolant, \cite{TW}, and, using the high order of accuracy of 
the truncation error (due to cancellations resulting from periodicity and the uniform mesh), and an
energy stability and convergence argument, we prove {\em a priori} optimal-order error estimates 
in $L^{2}$, i.e. of $O(h^{r})$, for both components of the semidiscrete solution. This is the first error 
estimate for a numerical method for the Serre system that we are aware of. As expected, the 
presence of the nonlinear dispersive terms complicates the error analysis that is now 
considerably more technical than in analogous proofs of convergence in the case
of Boussinesq systems, \cite{ADM}, and the shallow water equations, \cite{AD}. \par
In section 3 we present the results of numerical experiments that we performed in order 
to approximate solutions of the periodic initial-value problem for the Serre equations using 
mainly cubic
splines in space and the fourth-order accurate, explicit, `classical' Runge-Kutta scheme for
time stepping. We check first that the resulting fully discrete scheme is stable under a Courant
number restriction, enjoys optimal order of accuracy in various norms, and approximates to
high accuracy various types of solutions of the equations including solitary-wave solutions. 
We then use this scheme to illustrate properties of the solitary waves. In the preliminary
section $3.1$ we compare by analytical and numerical means the amplitudes $A_{S}$, $A_{CB}$,
$A_{Euler}$ of the solitary waves of, respectively, the Serre equations, the CB system, and the
Euler equations, corresponding to the same speed $c > 1$, for small values of $c^{2} - 1$.
Our study complements the analogous numerical computations of Li {\emph{et al.}}, \cite{LHC}, and
our conclusion is that always $A_{S} < A_{CB}$ and that 
up to about $c = 1.2$, $A_{S} < A_{Euler} < A_{CB}$ and 
$|A_{Euler} - A_{S}| < |A_{Euler} - A_{CB}|$. For larger speeds the solitary waves of both 
long-wave models are no longer accurate approximations of the solitary wave of the Euler 
equations. In section $3.2$ we study numerically the {\emph{resolution}} of general initial
profiles into sequences of solitary waves when the evolution occurs according to the Serre or
the CB equations. The number of the emerging solitary waves seems to be the same for both systems
and agrees with the prediction of the asymptotic analysis of \cite{EGS}. However, the emerging
solitary waves of the CB equations are faster and of larger amplitude than their Serre counterparts.
Finally, in section $3.3$ we make a careful numerical study of {\emph{overtaking collisions}}
of two solitary waves of the Serre equations, as the ratio of their amplitudes is varied.
We observed types of interaction that are similar to the cases $(a)$, $(b)$, and $(c)$ of
Lax's Lemma $2.3$ in \cite{Lx} for the KdV equation. In addition, for the Serre system, there is
apparently another type of interaction, intermediate between Lax's cases $(a)$ and $(b)$. \par
In this paper we denote, for integer $k\geq 0$, by $H_{per}^{k}=H_{per}^{k}(0,1)$ the usual 
$L^{2}$-based 
Sobolev spaces of periodic functions on $[0,1]$ and their norms by $\|\cdot\|_{k}$. We let 
$C_{per}^{k}=C_{per}^{k}[0,1]$ be the $k$-times continuously differentiable 1-periodic functions. 
The inner
product on $L^{2}=L^{2}(0,1)$ is denoted by $(\cdot,\cdot)$ and the corresponding norm simply by 
$\|\cdot\|$. The norms on $W_{\infty}^{k}=W_{\infty}^{k}(0,1)$ and $L^{\infty}=L^{\infty}(0,1)$ are 
denoted by $\|\cdot\|_{k,\infty}$ and $\|\cdot\|_{\infty}$, respectively. $\mathbb{P}_{r}$  are
the polynomials of degree at most $r$. \par
The paper is dedicated to Jerry Bona, long-time friend, teacher and mentor, on the occasion 
of his 70$^{th}$ birthday.  
\vspace{3pt}  \\
\indent 
{\bf{Acknowledgement:}} This work was partially supported by the programmatic agreement between 
Research Centers-GSRT 2015-2017 in the framework of the Hellenic Republic - Siemens agreement.  \par 
D. E. Mitsotakis was supported by the Marsden Fund administered by the Royal Society of New Zealand.
\section{Galerkin semidiscretization} 
We shall analyze the Galerkin semidiscrete approximation of the periodic initial-value problem for the Serre system in the
following form. Assuming that $\eta$ is positive, we multiply the pde (\ref{eq14}) by $\eta$ and consider the periodic
initial-value problem for the resulting system. Specifically, given $T>0$, for $t\in [0,T]$ we seek 1-periodic functions
$\eta(\cdot,t)$ and $u(\cdot,t)$ satisfying
\begin{equation}
	\begin{aligned} 
		\begin{aligned}
			& \eta_{t} + (\eta u)_{x} = 0, 
			\\
			&\eta u_{t} + \eta\eta_{x} + \eta uu_{x} -\tfrac{1}{3}\bigl[ \eta^{3}(u_{xt} + uu_{xx} -u_{x}^{2})\bigr]_{x}= 0,  
		\end{aligned}
		& \quad (x,t)\in [0,1]\times[0,T], 
		\\
	\eta(x,0) =\eta_{0}(x), \quad u(x,0)=u_{0}(x), \quad 0 \leq x\leq 1, \,\,\,\,\,\,\, & 
	\end{aligned}
	\tag{S}
	\label{eqs}
\end{equation}
where $\eta_{0}$, $u_{0}$ are given 1-periodic functions. For the purposes of the error estimation we shall assume that
$\eta_{0}$ and $u_{0}$ are smooth enough with $\min_{0\leq x\leq 1}\eta_{0}(x)\geq c_{0}>0$ for some constant $c_{0}$
and that (\ref{eqs}) has a unique sufficiently smooth solution $(\eta,u)$ which is 1-periodic in $x$ for all $t\in [0,T]$
and is such that $\eta(x,t)\geq c_{0}$  for $(x,t)\in [0,1]\times [0,T]$.
\subsection{Smooth periodic splines and the quasiinterpolant}
Let $N$ be a positive integer and $h=1/N$, $x_{i}=ih$, $i=0,1,\ldots,N$. For integer $r\geq 2$ consider the associated
$N$-dimensional space of smooth 1-periodic splines
\[
S_{h} = \{\phi \in C_{per}^{r-2}[0,1] : \phi\big|_{[x_{i-1},x_{i}]} \in \mathbb{P}_{r-1}\,, 1\leq i\leq N\}.
\]
It is  well known that $S_{h}$ has the following approximation properties: Given a sufficiently smooth 1-periodic
function $v$, there exists $\chi\in S_{h}$ such that
\[
\sum_{j=0}^{s-1}h^{j}\|v-\chi\|_{j} \leq C h^{s} \|v\|_{s}, \quad 1\leq s\leq r,
\]
and
\[
\sum_{j=0}^{s-1}h^{j}\|v-\chi\|_{j,\infty} \leq C h^{s} \|v\|_{s,\infty}, \quad 1\leq s\leq r,
\]
for some constant $C$ independent of $h$ and $v$. Moreover there exists a constant $C$ independent of $h$ 
such that the inverse properties
\begin{align*}
\|\chi\|_{\beta} & \leq Ch^{-(\beta - \alpha)} \|\chi\|_{\alpha}, \quad 0\leq \alpha\leq \beta\leq r-1, \\
\|\chi\|_{s,\infty} & \leq Ch^{-(s +1/2)} \|\chi\|, \quad 0\leq s \leq r-1,
\end{align*}
hold for all $\chi\in S_{h}$. (In the sequel we shall denote by $C$ generic constants independent of $h$.) \par
Thom\'ee and Wendroff, \cite{TW}, proved that there exists a basis $\{\phi_{j}\}_{j=1}^{N}$ of $S_{h}$ with
$\supp(\phi_{j})=O(h)$, such that if $v$ a sufficiently smooth 1-periodic function, the associated {\em quasiinterpolant}
$Q_{h}v=\sum_{j=1}^{N}v(x_{j})\phi_{j}$ satisfies
\begin{equation}
\|Q_{h}v - v\| \leq Ch^{r} \|v^{(r)}\|. 
\label{eq21}
\end{equation}
In addition, it was shown in \cite{TW} that the basis $\{\phi_{j}\}_{j=1}^{N}$ may be chosen so that the following 
properties hold: \\
(i)\, If $\psi\in S_{h}$, then
\begin{equation}
\|\psi\| \leq Ch^{-1} \max_{1\leq i\leq N}|(\psi,\phi_{i})|.
\label{eq22}
\end{equation}
(It follows from (\ref{eq22}) that if $\psi\in S_{h}$, $f\in L^{2}$ are such that 
\[
(\psi,\phi_{i}) = (f,\phi_{i}) + O(h^{\alpha}), \quad \mbox{for} \quad 1\leq i\leq N,
\]
i.e. if $|(\psi - P_{h}f,\phi_{i})| \leq Ch^{\alpha}$, $1\leq i\leq N$, where $P_{h}$ is the $L^{2}$-projection operator on 
$S_{h}$, then $\|\psi\|\leq \|\psi - P_{h}f\| + \|P_{h}f\| \leq Ch^{\alpha-1} + \|f\|$.) \\
(ii)\, Let $w$ be a sufficiently smooth 1-periodic function and $\nu$, $\kappa$ integers such that 
$0\leq\nu , \kappa\leq r-1$. Then
\begin{equation}
\bigl( (Q_{h}w)^{(\nu)},\phi_{i}^{(\kappa)}\bigr) = (-1)^{\kappa} h w^{(\nu+\kappa)}(x_{i}) + O(h^{2r+j-\nu-\kappa}), \quad
1\leq i\leq N,
\label{eq23}
\end{equation}  
where $j=1$ if $\nu + \kappa$ is even and $j=2$ if $\nu+\kappa$ is odd. \\
(iii)\, Let $f$, $g$ be sufficiently smooth 1-periodic functions and $\nu$ and $\kappa$ as in (ii) above. Let
\[
\beta_{i} = \bigl( f(Q_{h}g)^{(\nu)}, \phi_{i}^{(\kappa)}\bigr) -(-1)^{\kappa} \bigl(Q_{h}\bigl[(fg^{(\nu)})^{(\kappa)}\bigr],\phi_{i}\bigr),
\quad 1\leq i\leq N.
\]
Then
\begin{equation}
\max_{1\leq i\leq N}|\beta_{i}| = O(h^{2r+j-\nu-\kappa}), 
\label{eq24}
\end{equation}
where $j$ as in (ii). \par
It is straightforward to see that the following result also holds for the quasiinterpolant:
\begin{lemma} Let $r\geq 3$, $v\in H_{per}^{r}(0,1)\cap W_{\infty}^{r}(0,1)$ and put \,$V=Q_{h}v$. Then
\begin{align}
& \|V - v\|_{j} \leq Ch^{r-j}\|v\|_{r}, \quad j=0,1,2,
\label{eq25} \\
& \|V - v\|_{j,\infty} \leq Ch^{r-j-1/2}\|v\|_{r,\infty}, \quad j=0,1,2,
\label{eq26} \\
& \|V\|_{j} \leq C, \quad \mbox{and} \quad \|V\|_{j,\infty} \leq C, \quad j=0,1,2.
\label{eq27}
\end{align}
If in addition $\min_{0\leq x\leq 1}v(x)\geq c_{0}>0$, then there exists $h_{0}$ such that
\begin{equation}
\min_{0\leq x\leq 1}V(x) \geq c_{0}/2, \quad \mbox{for} \quad h \leq h_{0}.
\label{eq28}
\end{equation}
\end{lemma}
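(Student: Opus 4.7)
The plan is to bootstrap from the $L^2$ quasiinterpolation bound (\ref{eq21}) and the standard spline approximation/inverse properties stated just above the lemma; the argument is a triangle-inequality-plus-inverse-inequality dance, using the fact that $V-\chi\in S_h$ for any auxiliary spline $\chi$. Since $r\geq 3$, the inverse inequalities are available for $j=0,1,2$ ($\beta\leq r-1$ requires $r\geq 3$), which is the reason for that hypothesis.

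For (\ref{eq25}), I would choose $\chi\in S_h$ a best-approximation spline with $\|v-\chi\|_j\leq Ch^{r-j}\|v\|_r$, $j=0,1,2$, which exists by the stated approximation property. Since $V-\chi\in S_h$, the first inverse inequality and (\ref{eq21}) give
\[
\|V-\chi\|_j \leq Ch^{-j}\|V-\chi\|\leq Ch^{-j}\bigl(\|V-v\|+\|v-\chi\|\bigr)\leq Ch^{r-j}\|v\|_r,\quad j=0,1,2,
\]
and the triangle inequality $\|V-v\|_j\leq \|V-\chi\|_j+\|\chi-v\|_j$ yields (\ref{eq25}). For (\ref{eq26}) I would instead take $\chi\in S_h$ satisfying the $L^\infty$ approximation property $\|v-\chi\|_{j,\infty}\leq Ch^{r-j}\|v\|_{r,\infty}$. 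Using $\|\cdot\|\leq \|\cdot\|_\infty$ on $[0,1]$ and (\ref{eq21}),
\[
\|V-\chi\|\leq \|V-v\|+\|v-\chi\|\leq Ch^r\|v\|_{r,\infty},
\]
so the second inverse inequality ($\|\psi\|_{s,\infty}\leq Ch^{-(s+1/2)}\|\psi\|$ for $\psi\in S_h$, $s\leq r-1$) gives
\[
\|V-\chi\|_{j,\infty}\leq Ch^{-(j+1/2)}\|V-\chi\|\leq Ch^{r-j-1/2}\|v\|_{r,\infty},\quad j=0,1,2,
\]
and (\ref{eq26}) follows by another triangle inequality (absorbing the $Ch^{r-j}\|v\|_{r,\infty}$ term from $\|\chi-v\|_{j,\infty}$, which is smaller in powers of $h$).

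The bounds in (\ref{eq27}) are then immediate: $\|V\|_j\leq \|v\|_j+\|V-v\|_j\leq C$ and $\|V\|_{j,\infty}\leq \|v\|_{j,\infty}+\|V-v\|_{j,\infty}\leq C$, the error terms being uniformly bounded in $h$ by (\ref{eq25}) and (\ref{eq26}) (in fact $o(1)$). Finally, (\ref{eq28}) follows from the $j=0$ case of (\ref{eq26}): since $r-1/2\geq 5/2>0$, one has $\|V-v\|_\infty\leq Ch^{r-1/2}\|v\|_{r,\infty}\to 0$ as $h\to 0$, so choosing $h_0$ so that $Ch_0^{r-1/2}\|v\|_{r,\infty}<c_0/2$ gives $V(x)\geq v(x)-\|V-v\|_\infty\geq c_0/2$ for every $x\in[0,1]$ and every $h\leq h_0$. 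There is no genuine obstacle here; the only thing to keep in mind is to use the inverse estimate in the correct direction (bound high norms of splines by low norms, applied to $V-\chi$ rather than directly to $V-v$, which is not a spline).
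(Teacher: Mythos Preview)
Your proof is correct and follows essentially the same route as the paper: the paper merely states that (\ref{eq25}) and (\ref{eq26}) follow from the approximation and inverse properties of $S_h$ together with (\ref{eq21}) (citing \cite{DK}), and then derives (\ref{eq27}) and (\ref{eq28}) as immediate consequences, exactly as you do. You have simply spelled out the triangle-inequality-plus-inverse-inequality argument (applied to $V-\chi\in S_h$) that the paper leaves implicit.
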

\begin{proof} The estimates (\ref{eq25}), (\ref{eq26}) follow, as was remarked in \cite{DK}, from the approximation
and inverse properties of $S_{h}$ and (\ref{eq21}), and imply the bounds in (\ref{eq27}). To prove (\ref{eq28}) note
that by (\ref{eq26})
\[
V(x) = \bigl( V(x) - v(x)\bigr) + v(x) \geq -Ch^{r-1/2}\|v\|_{r,\infty} + c_{0} \geq \tfrac{c_{0}}{2},
\]
for $h\leq h_{0}$, with $h_{0}$ such that $Ch_{0}^{r-1/2}\|v\|_{r,\infty}\leq c_{0}/2$. \,
\end{proof}		
\subsection{Consistency of the semidiscrete approximation}
The standard Ga\-ler\-kin semidiscretization of (\ref{eqs}) is defined as follows. We seek  $(\eta_{h},u_{h}) : [0,T]\to S_{h}$,
satisfying for $t\in [0,T]$ the equations
\begin{equation}
\begin{aligned}
& (\eta_{ht},\phi) + ((\eta_{h}u_{h})_{x},\phi) = 0, \quad \forall \phi \in S_{h},\\
& (\eta_{h}u_{ht},\chi) + \tfrac{1}{3}(\eta_{h}^{3}u_{htx},\chi') + (\eta_{h}\eta_{hx},\chi) + (\eta_{h}u_{h}u_{hx},\chi) \\
& \hspace{80pt}  +\tfrac{1}{3}\bigl(\eta_{h}^{3}(u_{h}u_{hxx}-u_{hx}^{2}\bigr),\chi')=0, \quad \forall \chi \in S_{h},
\end{aligned}
\label{eq29}
\end{equation} 
with initial conditions
\begin{equation}
\eta_{h}(0) = Q_{h}\eta_{0}, \quad u_{h}(0) = Q_{h}u_{0}. 
\label{eq210}
\end{equation}
\indent
We first establish the consistency of this semidiscretization to the p.d.e. system in (\ref{eqs}) by proving an optimal-order
$L^{2}$ estimate of a suitable truncation error of (\ref{eq29}).
\begin{proposition}
Let $(\eta,u)$ be the solution of $(\ref{eqs})$  and let $\eta$, $u$ be sufficiently smooth,
1-periodic in $x$. Let $r\geq 3$,
$H=Q_{h}\eta$, $U=Q_{h}u$, and define $\psi, \delta : [0,T]\to S_{h}$ by the equations
\begin{equation}
\begin{aligned}
& (H_{t},\phi) + ((HU)_{x},\phi) = (\psi,\phi), \quad \forall \phi \in S_{h},\\
& (HU_{t},\chi) + \tfrac{1}{3}(H^{3}U_{tx},\chi') + (HH_{x},\chi) + (HUU_{x},\chi) \\
& \hspace{80pt}  +\tfrac{1}{3}\bigl(H^{3}(UU_{xx}-U_{x}^{2}\bigr),\chi')=A(\delta,\chi), 
\quad \forall \chi \in S_{h},
\end{aligned}
\label{eq211}
\end{equation} 
where $A(v,w) = (v,w) + (v',w')$ denotes the $H^{1}$ inner product. 
Then, there exists a constant $C$ independent of $h$, such that
\begin{equation}
\max_{0\leq t\leq T}(\|\psi(t)\| + \|\delta(t)\|_{1}) \leq Ch^{r}.
\label{eq212}
\end{equation}
\end{proposition}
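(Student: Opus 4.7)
The plan is to establish optimal $O(h^r)$ bounds on both truncation errors by exploiting the high-order cancellations encoded in (\ref{eq23})--(\ref{eq24}), which follow from the periodicity of the problem and the uniformity of the mesh. The guiding principle is that whenever a product appearing in the PDE can be written, modulo small corrections, with one smooth factor and one quasiinterpolant factor, its pairing with $\phi_i^{(\kappa)}$ matches the corresponding nodal value of the product up to an error of order $h^{2r+1}$. Combined with property (\ref{eq22}) (and the $f$-trick of the remark following it) for $\psi$, and with the $H^1$-coercivity of $A$ for $\delta$, this will convert the resulting moment estimates into the desired $L^2$ and $H^1$ bounds.

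For the first equation I would first rewrite $(\psi,\phi_i)=(H_t,\phi_i)-(HU,\phi_i')$ using periodicity, and apply (\ref{eq23}) with $\nu=\kappa=0$ to the first term to get $h\eta_t(x_i)+O(h^{2r+1})$. To treat $(HU,\phi_i')$ I split $HU=\eta(Q_h u)+(Q_h\eta-\eta)(Q_h u)$; on the first piece, (\ref{eq24}) with $f=\eta$, $g=u$, $\nu=0$, $\kappa=1$ followed by (\ref{eq23}) yields $-h(\eta u)_x(x_i)+O(h^{2r+1})$, and the two leading $h$-terms then cancel exactly by the continuity equation $\eta_t+(\eta u)_x=0$. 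The remaining cross-term $((Q_h\eta-\eta)(Q_h u),\phi_i')$ is handled by integrating by parts and further expanding $Q_h u=u+(Q_h u-u)$, reducing it to finitely many products of the form "smooth factor $\times$ quasiinterpolant error" paired with $\phi_i$; each of these is either directly $O(h^{r+1})$ via Lemma~2.1 together with $\|\phi_i\|_{L^1}=O(h)$, or is brought into a form handled by another invocation of (\ref{eq24}). Invoking (\ref{eq22}) then delivers $\|\psi\|\le Ch^r$.

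For the second equation I would use that the exact pair $(\eta,u)$ satisfies the weak form of the second Serre equation identically, so that subtracting gives $A(\delta,\chi)$ as a sum of five differences of the form $(\Phi(H,U)-\Phi(\eta,u),\chi^{(k)})$ with $k\in\{0,1\}$. For each difference I substitute $H=\eta+(Q_h\eta-\eta)$ and $U=u+(Q_h u-u)$ (together with their derivatives) and expand; the boundedness of $\eta$, $u$ and their derivatives from the smoothness assumption, combined with the estimates of Lemma~2.1 for $Q_h\eta-\eta$, $Q_h u-u$ and their first two derivatives, controls every resulting cross-term. The cubic contributions $H^3 U_{tx}$ and $H^3(UU_{xx}-U_x^2)$ are treated by the factorisation $H^3-\eta^3=(H-\eta)(H^2+H\eta+\eta^2)$ and the $L^\infty$ bounds on $H$ and $\eta$. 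Using $\|\chi\|,\|\chi'\|\le\|\chi\|_1$ and summing, one obtains $|A(\delta,\chi)|\le Ch^r\|\chi\|_1$ for every $\chi\in S_h$; taking $\chi=\delta$ and using $A(\delta,\delta)=\|\delta\|_1^2$ then gives $\|\delta\|_1\le Ch^r$.

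The main obstacle will be the terms in which a derivative of a quasiinterpolation error is paired with $\chi'$, most notably the dispersive contribution $(\eta^3(Q_h u_t-u_t)_x,\chi')$, because $\|(Q_h u_t-u_t)_x\|=O(h^{r-1})$ is one order too weak for a naive Cauchy--Schwarz bound and simple integration by parts does not help (it shifts the derivative onto $\chi''$, on which only the inverse bound $\|\chi''\|\le Ch^{-1}\|\chi\|_1$ is available). The way to recover the missing factor of $h$ is again to exploit the superconvergent cancellation of (\ref{eq24}): writing $\chi=\sum_j c_j\phi_j$ and decomposing termwise, each individual moment $(\,\cdot\,,\phi_j)$ or $(\,\cdot\,,\phi_j')$ can be rewritten as a difference of the form $(Q_h[\cdot]-[\cdot],\phi_j^{(\kappa)})$ that (\ref{eq24}) bounds by $O(h^{2r+1})$, and the estimate $\sum_j|c_j|\le Ch^{-1}\|\chi\|$ for the spline coefficients then restores the optimal $O(h^r)\|\chi\|_1$ bound.
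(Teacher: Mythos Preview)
Your strategy for $\psi$ is sound and parallels the paper's: both routes ultimately feed the problematic first-order error terms through (\ref{eq24}) and then invoke the ``$f$-trick'' following (\ref{eq22}). Your decomposition (integrate by parts first, cancel the leading nodal values $h\eta_t(x_i)+h(\eta u)_x(x_i)$ via the PDE, then mop up the cross terms) is slightly different from the paper's (expand $HU-\eta u$ in $\rho=H-\eta$, $\sigma=U-u$, isolate an auxiliary $\widetilde\psi\in S_h$), but both work.

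For $\delta$ there is a genuine gap in your last paragraph. Property (\ref{eq24}) does \emph{not} bound the individual moment $(\eta^{3}\sigma_{tx},\phi_j')$ by $O(h^{2r+1})$. What it gives (with $\nu=\kappa=1$, hence exponent $2r-1$) is the rewriting
\[
(\eta^{3}\sigma_{tx},\phi_j')
= -\bigl(Q_h[(\eta^{3}u_{tx})_x]-(\eta^{3}u_{tx})_x,\phi_j\bigr)+\beta_j,
\qquad |\beta_j|\le Ch^{2r-1},
\]
and the first bracket on the right is only $O(h^{r+1/2})$ for each $j$. If you now sum via $\sum_j|c_j|\le Ch^{-1}\|\chi\|$ as you propose, the $\beta_j$-part indeed gives $O(h^{2r-2})\|\chi\|$, but the first part yields only $O(h^{r-1/2})\|\chi\|$ --- half an order short. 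The coefficient-sum device cannot see that these $(Q_hw-w,\phi_j)$ pieces share a \emph{common} $f=Q_hw-w$ with $\|f\|=O(h^r)$.

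The fix is exactly the ``$f$-trick'' you already used for $\psi$: collect all the linear-in-$(\rho,\sigma)$ terms with smooth coefficients (the ones whose naive bound is $O(h^{r-1})$ or worse) into an auxiliary element $\widetilde\delta\in S_h$ defined by $(\widetilde\delta,\chi)=\sum(\widetilde v_k,\chi^{(\kappa_k)})$, apply (\ref{eq24}) to write $(\widetilde\delta,\phi_i)=(f,\phi_i)+\gamma_i$ with $\|f\|\le Ch^r$ and $\max_i|\gamma_i|\le Ch^{2r-1}$, and invoke the remark after (\ref{eq22}) to obtain $\|\widetilde\delta\|\le Ch^r$. This is precisely what the paper does; the remaining terms (those quadratic or higher in $\rho,\sigma$) are then bounded directly by Cauchy--Schwarz and Lemma~2.1, and setting $\chi=\delta$ in the resulting inequality gives $\|\delta\|_1\le Ch^r$.
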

\begin{proof} Let $\rho=Q_{h}\eta - \eta = H-\eta$ and $\sigma=Q_{h}u-u = U-u$. From the first pde in (\ref{eqs}) and
from (\ref{eq211}) we obtain
\[
(\psi,\phi) = (\rho_{t},\phi) + ((HU)_{x} - (\eta u)_{x},\phi), \quad \forall \phi\in S_{h}.
\]
Since
\[
(HU - \eta u)_{x} = \bigl((\rho + \eta)(\sigma+u)\bigr)_{x} - (\eta u)_{x}=(\rho\sigma)_{x} + (\eta\sigma)_{x} + (u\rho)_{x},
\]
it follows that
\begin{equation}
(\psi,\phi) = (\rho_{t},\phi) +((\rho\sigma)_{x},\phi) + (\eta_{x}\sigma+u_{x}\rho,\phi) + (\wt{\psi},\phi), \quad \forall \phi\in S_{h},
\label{eq213}
\end{equation}
where $\wt{\psi} : [0,T]\to S_{h}$ is given by
\[
(\wt{\psi},\phi) = (\eta\sigma_{x},\phi) + (u\rho_{x},\phi), \quad \forall \phi\in S_{h}.
\]
In order to estimate $\wt{\psi}$ we take into account (\ref{eq24}) and obtain for $1\leq i\leq N$ 
\begin{align*}
(\wt{\psi},\phi_{i}) & = (\eta\sigma_{x},\phi_{i}) + (u\rho_{x},\phi_{i}) \\
& = (\eta(Q_{h}u)_{x} - \eta u_{x},\phi_{i}) + (u(Q_{h}\eta)_{x} - u\eta_{x},\phi_{i}) \\
& = (Q_{h}(\eta u_{x}) - \eta u_{x},\phi_{i}) + (Q_{h}(u\eta_{x}) - u\eta_{x},\phi_{i}) + \gamma_{i},
\end{align*}
where $\max_{1\leq i\leq N}|\gamma_{i}|\leq Ch^{2r+1}$. Therefore, using the remark following (\ref{eq22}) and 
(\ref{eq21}) we conclude that	 
\begin{equation}
\|\wt{\psi}\| \leq Ch^{r}.
\label{eq214}
\end{equation}
Taking now $\phi=\psi$ in (\ref{eq213}), by (\ref{eq25}), (\ref{eq214}) we obtain
\begin{equation}
\|\psi\|\leq Ch^{r}.
\label{eq215}
\end{equation}
Proving an analogous estimate for $\delta$ is more complicated due to the presence of the 
nonlinear dispersive terms. From the second pde in (\ref{eqs}) and (\ref{eq211}) we see that
\small
\begin{equation}
\begin{aligned}
A(\delta,\chi) & = (HU_{t} - \eta u_{t},\chi) + \tfrac{1}{3}(H^{3}U_{tx} - \eta^{3}u_{tx},\chi') + (HH_{x}-\eta\eta_{x},\chi)\\
&\,\,\,\,\,\, + (HUU_{x}-\eta uu_{x},\chi) 
+ \tfrac{1}{3}(H^{3}UU_{xx} - \eta^{3}uu_{xx},\chi') \\
&\,\,\,\,\,\, - \tfrac{1}{3}(H^{3}U_{x}^{2}-\eta^{3}u_{x}^{2},\chi'), \quad
\forall \chi \in S_{h}.
\end{aligned}
\label{eq216}
\end{equation}
\normalsize
For the first term in the right-hand side of (\ref{eq216}) we have 
\[
HU_{t} - \eta u_{t} = (\rho + \eta)(\sigma_{t} + u_{t}) - \eta u_{t} = H\sigma_{t} + u_{t}\rho,
\]
and by (\ref{eq21}) and (\ref{eq27}) we get 
\begin{equation}
\|HU_{t} - \eta u_{t}\| = \|H\sigma_{t} + u_{t}\rho\| \leq Ch^{r}.
\label{eq217}
\end{equation}
To treat the second term in the right-hand side of (\ref{eq216}) we write
\begin{align*}
H^{3}U_{tx} - \eta^{3}u_{tx} & =(\rho+\eta)^{3}(\sigma_{tx} + u_{tx}) - \eta^{3}u_{tx} \\
& = U_{tx}\rho^{3} + 3\eta U_{tx}\rho^{2} + 3\eta^{2}\rho\sigma_{tx} + 3\eta^{2}u_{tx}\rho + \eta^{3}\sigma_{tx},
\end{align*}
i.e. 
\begin{equation}
\begin{aligned}
& \tfrac{1}{3}(H^{3}U_{tx} - \eta^{3}u_{tx}) = v_{1} + \wt{v}_{1},  \\
& v_{1} = \tfrac{1}{3}U_{tx}\rho^{3} + \eta U_{tx}\rho^{2} + \eta^{2}\rho\sigma_{tx},\quad 
\wt{v}_{1} = \eta^{2}u_{tx}\rho + \tfrac{1}{3}\eta^{3}\sigma_{tx},
\end{aligned}
\label{eq218}
\end{equation}
Using (\ref{eq27}), (\ref{eq26}), and (\ref{eq25}) we see that 
\begin{align*}
\|v_{1}\| & \leq C(\|\rho^{3}\| + \|\rho^{2}\| + \|\rho\sigma_{tx}\|) \\
& \leq C(\|\rho\|_{\infty}^{2} \|\rho\| + \|\rho\|_{\infty} \|\rho\| + \|\rho\|_{\infty} \|\sigma_{tx}\|) \leq Ch^{2r-3/2},
\end{align*}
from which it follows that
\begin{equation}
\|v_{1}\| \leq Ch^{r}.
\label{eq219}
\end{equation}
For the third term we have 
\[
HH_{x} - \eta\eta_{x} = (\rho + \eta)(\rho_{x} + \eta_{x}) - \eta\eta_{x} = \rho\rho_{x} + \eta_{x}\rho + \eta\rho_{x},
\]
i.e.
\begin{equation}
\begin{aligned}
& HH_{x} - \eta\eta_{x} = v_{2} + \wt{v}_{2}, \\
& v_{2} = \rho\rho_{x} + \eta_{x}\rho, \quad \wt{v}_{2} = \eta\rho_{x},
\end{aligned}
\label{eq220}
\end{equation}
while for the fourth term we write 
\begin{align*}
HUU_{x} - \eta uu_{x} & = (\rho+\eta)(\sigma+u)(\sigma_{x}+u_{x}) - \eta uu_{x} \\
& =(\rho+\eta)(\sigma\sigma_{x} +(u\sigma)_{x} + uu_{x}) - \eta uu_{x}\\
& = H\sigma\sigma_{x} + \rho(u\sigma)_{x} + uu_{x}\rho + \eta u_{x}\sigma + \eta u \sigma_{x},
\end{align*}
or
\begin{equation}
\begin{aligned}
& HUU_{x} - \eta uu_{x} = v_{3} + \wt{v}_{3}, \\
& v_{3} = H\sigma\sigma_{x} + \rho(u\sigma)_{x} + uu_{x}\rho + \eta u_{x}\sigma, \quad \wt{v}_{3} = \eta u\sigma_{x},
\end{aligned}
\label{eq221}
\end{equation}
Using again (\ref{eq25})-(\ref{eq27}) we see, as in the estimation of $v_{1}$ that
\begin{equation}
\|v_{2}\| + \|v_{3}\| \leq Ch^{r}.
\label{eq222}
\end{equation}
For the fifth term in the right-hand side of (\ref{eq216}) we write
\begin{align*}
H^{3} & UU_{xx} - \eta^{3}uu_{xx}  = (\rho+\eta)^{3}(\sigma+u)(\sigma_{xx}+u_{xx}) - \eta^{3}uu_{xx}\\
& = \rho^{3}UU_{xx} + 3\eta\rho^{2}UU_{xx} + 3\eta^{2}\rho\sigma\sigma_{xx} + 3\eta^{2}\rho(\sigma u_{xx} + u\sigma_{xx})\\
&\,\,\,\,\,\, + 3\eta^{2}\rho uu_{xx} + \eta^{3}\sigma\sigma_{xx} + \eta^{3}\sigma u_{xx} + \eta^{3}u\sigma_{xx},
\end{align*} 
i.e.
\begin{equation}
\begin{aligned}
& \tfrac{1}{3} (H^{3}UU_{xx} - \eta^{3}uu_{xx}) = v_{4} + \wt{v}_{4}, \\
&v_{4} = \tfrac{1}{3}UU_{xx}\rho^{3} + \eta UU_{xx}\rho^{2} +\eta^{2}\rho\sigma\sigma_{xx}+\eta^{2}(u_{xx}\rho\sigma + u\rho\sigma_{xx})
+\tfrac{1}{3}\eta^{3}\sigma\sigma_{xx}\\
&\wt{v}_{4} = \eta^{2}uu_{xx}\rho + \tfrac{1}{3}\eta^{3}u_{xx}\sigma + \tfrac{1}{3}\eta^{3}u\sigma_{xx}.
\end{aligned}
\label{eq223}
\end{equation} 	 
Hence, from (\ref{eq25})-(\ref{eq27}) 
\begin{align*}
\|v_{4}\| \leq C (& \|\rho\|_{\infty}^{2}  \|\rho\| + \|\rho\|_{\infty} \|\rho\| + \|\rho\|_{\infty}\|\sigma\|_{\infty} \|\sigma_{xx}\| \\
& + \|\rho\|_{\infty} \|\sigma\| + \|\rho\|_{\infty}\|\sigma_{xx}\| + \|\sigma\|_{\infty} \|\sigma_{xx}\|) \leq Ch^{2r-5/2}.
\end{align*}
Therefore, since $r\geq 3$, 
\begin{equation}
\|v_{4}\|\leq Ch^{r}.
\label{eq224}
\end{equation}
Finally, for the last term in the right-hand side of (\ref{eq216}) we have
\begin{align*}
H^{3}U_{x}^{2} - \eta^{3}u_{x}^{2} & = (\rho + \eta)^{3}(\sigma_{x} + u_{x})^{2} - \eta^{3}u_{x}^{2} \\
& = \rho^{3}U_{x}^{2} + 3\eta\rho^{2}U_{x}^{2} + 3\eta^{2}\rho (\sigma_{x}^{2} + 2u_{x}\sigma_{x} + u_{x}^{2})
+ \eta^{3}(\sigma_{x}^{2} + 2u_{x}\sigma_{x}),
\end{align*}
i.e.
\begin{equation}
\begin{aligned}
&\tfrac{1}{3}(H^{3}U_{x}^{2} - \eta^{3}u_{x}^{2}) = v_{5} + \wt{v}_{5}, \\
& v_{5} = \tfrac{1}{3}\rho^{3}U_{x}^{2} + \eta\rho^{2}U_{x}^{2} + \eta^{2}\rho\sigma_{x}^{2} + 2\eta^{2}\rho u_{x}\sigma_{x}
+ \tfrac{1}{3} \eta^{3}\sigma_{x}^{2}, \\
& \wt{v}_{5} = \eta^{2}u_{x}^{2}\rho + \tfrac{2}{3}\eta^{3}u_{x}\sigma_{x}.
\end{aligned}
\label{eq225}
\end{equation}
From (\ref{eq25})-(\ref{eq27}) we have as before 
\begin{align*}
\|v_{5}\| \leq C(& \|\rho\|_{\infty}^{2} \|\rho\| + \|\rho\|_{\infty}\|\rho\| + \|\rho\|_{\infty}\|\sigma_{x}\|_{\infty} \|\sigma_{x}\| \\
& + \|\rho\|_{\infty} \|\sigma_{x}\| + \|\sigma_{x}\|_{\infty} \|\sigma_{x}\|) \leq Ch^{2r-5/2},
\end{align*}
which gives, since $r\geq 3$, 
\begin{equation}
\|v_{5}\| \leq C h^{r}.
\label{eq226}
\end{equation}
Hence, from (\ref{eq216}), (\ref{eq218}), (\ref{eq220}), (\ref{eq221}), (\ref{eq223}), and (\ref{eq224}) we have for $\chi \in S_{h}$
\begin{equation}
A(\delta,\chi) = (H\sigma_{t} + u_{t}\rho,\chi) + (v_{1},\chi') + (v_{2} + v_{3},\chi) 
+ (v_{4}-v_{5},\chi') + (\wt{\delta},\chi),
\label{eq227}
\end{equation}
where we have defined $\wt{\delta} : [0,T]\to S_{h}$ by the equation 
\[
(\wt{\delta},\chi) = (\wt{v}_{1},\chi') + (\wt{v}_{2},\chi) + (\wt{v}_{3},\chi) + (\wt{v}_{4},\chi') - (\wt{v}_{5},\chi'), \quad \chi\in S_{h}.
\]
Using the definitions of $\wt{v}_{i}$, $1\leq i\leq 5$, from (\ref{eq218}), (\ref{eq220}), (\ref{eq221}), (\ref{eq223}), and (\ref{eq225}),
we obtain for $\chi\in S_{h}$
\begin{equation}
\begin{aligned}
(\wt{\delta},\chi) & = (\eta^{2}u_{tx}\rho,\chi') + \tfrac{1}{3}(\eta^{3}\sigma_{tx},\chi') + (\eta\rho_{x},\chi)
+ (\eta u\sigma_{x},\chi) + (\eta^{2}uu_{xx}\rho,\chi') \\
&\,\,\,\,\, + \tfrac{1}{3}(\eta^{3}u_{xx}\sigma,\chi') + \tfrac{1}{3}(\eta^{3}u\sigma_{xx},\chi') - (\eta^{2}u_{x}^{2}\rho,\chi')
-\tfrac{2}{3}(\eta^{3}u_{x}\sigma_{x},\chi').
\end{aligned}
\label{eq228}
\end{equation}		
The term $(\wt{\delta},\chi)$ consists, like $(\wt{\psi},\phi)$, of $L^{2}$ inner products of $\rho$, $\sigma$, $\sigma_{t}$ and
their spatial derivatives with $\chi$ or $\chi'$ and with smooth periodic functions as weights. To treat these terms we invoke
again the cancellation property (\ref{eq24}) of the quasiinterpolant and write for $1\leq i\leq N$
\begin{align*}
(\wt{\delta},\phi_{i}) & = (\eta^{2}u_{tx}Q_{h}\eta,\phi_{i}') - (\eta^{3}u_{tx},\phi_{i}') + \tfrac{1}{3}\bigl(\eta^{3}(Q_{h}u_{t})_{x},\phi_{i}'\bigr) 
-\tfrac{1}{3}(\eta^{3}u_{tx},\phi_{i}') \\
&\,\,\,\,\,\, +\bigl(\eta(Q_{h}\eta)_{x},\phi_{i}\bigr) - (\eta\eta_{x},\phi_{i}) + (\eta u(Q_{h}u)_{x},\phi_{i}) - (\eta uu_{x},\phi_{i})\\
&\,\,\,\,\,\, + (\eta^{2}uu_{xx}Q_{h}\eta,\phi_{i}') - (\eta^{3}uu_{xx},\phi_{i}') + \tfrac{1}{3}(\eta^{3}u_{xx}Q_{h}u,\phi_{i}')
- \tfrac{1}{3}(\eta^{3}u_{xx}u,\phi_{i}') \\
&\,\,\,\,\,\, + \tfrac{1}{3}(\eta^{3}u(Q_{h}u)_{xx},\phi_{i}') - \tfrac{1}{3}(\eta^{3}uu_{xx},\phi_{i}') - (\eta^{2}u_{x}^{2}Q_{h}\eta,\phi_{i}')
+ (\eta^{3}u_{x}^{2},\phi_{i}') \\
&\,\,\,\,\, - \tfrac{2}{3}(\eta^{3}u_{x}(Q_{h}u)_{x},\phi_{i}') + \tfrac{2}{3}(\eta^{3}u_{x}^{2},\phi_{i}')\\
& = -\bigl(Q_{h}\bigl[(\eta^{3}u_{tx})_{x}\bigr] - (\eta^{3}u_{tx})_{x},\phi_{i}\bigr) + \gamma_{i}^{(1)} \\
&\,\,\,\,\,\, -\tfrac{1}{3}\bigl(Q_{h}\bigl[(\eta^{3}u_{tx})_{x}\bigr]-(\eta^{3}u_{tx})_{x},\phi_{i}\bigr) + \gamma_{i}^{(2)} 
+ \bigl(Q_{h}(\eta\eta_{x}) - \eta\eta_{x},\phi_{i}\bigr) + \gamma_{i}^{(3)} \\
&\,\,\,\,\,\, + \bigl(Q_{h}(\eta uu_{x})-\eta uu_{x},\phi_{i}\bigr) + \gamma_{i}^{(4)}
-\bigl(Q_{h}[(\eta^{3}uu_{xx})_{x}] - (\eta^{3}uu_{xx})_{x},\phi_{i}\bigr) + \gamma_{i}^{(5)} \\
&\,\,\,\,\,\, -\tfrac{1}{3}\bigl(Q_{h}\bigl[ (\eta^{3}uu_{xx})_{x}\bigr] - (\eta^{3}uu_{xx})_{x},\phi_{i}\bigr) + \gamma_{i}^{(6)}\\
&\,\,\,\,\,\, -\tfrac{1}{3}\bigl(Q_{h}\bigl[ (\eta^{3}uu_{xx})_{x}\bigr] - (\eta^{3}uu_{xx})_{x},\phi_{i}\bigr) + \gamma_{i}^{(7)} \\
&\,\,\,\,\,\, + \bigl(Q_{h}\bigl[(\eta^{3}u_{x}^{2})_{x}\bigr] - (\eta^{3}u_{x}^{2})_{x},\phi_{i}\bigr) + \gamma_{i}^{(8)} \\
&\,\,\,\,\,\, +\tfrac{2}{3}\bigl(Q_{h}\bigl[(\eta^{3}u_{x}^{2})_{x}\bigr] - (\eta^{3}u_{x}^{2})_{x},\phi_{i}\bigr) + \gamma_{i}^{(9)},
\end{align*}
where $\max_{1\leq i\leq N}\bigl(|\gamma_{i}^{(1)}| + |\gamma_{i}^{(3)}| +|\gamma_{i}^{(4)}| + |\gamma_{i}^{(5)}|
+ |\gamma_{i}^{(6)}| + |\gamma_{i}^{(8)}|\bigr) \leq Ch^{2r+1}$, while  
$\max_{1\leq i\leq N}\bigl(|\gamma_{i}^{(2)}| + |\gamma_{i}^{(7)}| +|\gamma_{i}^{(9)}|\bigr) \leq Ch^{2r-1}$. Therefore, by
the remark following (\ref{eq22}) and by (\ref{eq21}) we conclude
\begin{equation}
\|\wt{\delta}\| \leq C(h^{2r-2} + h^{r}) \leq Ch^{r}.
\label{eq229}
\end{equation}
Putting now $\chi = \delta$ in (\ref{eq227}) and using (\ref{eq217}), (\ref{eq219}), 
(\ref{eq222}), (\ref{eq224}), (\ref{eq226}), and (\ref{eq229}) we obtain finally
\begin{equation}
\|\delta\|_{1} \leq Ch^{r},
\label{eq230}
\end{equation}
which together with (\ref{eq215}) gives the desired estimate (\ref{eq212}). 
\end{proof}
\subsection{Error estimate}
We now prove using an energy technique an optimal-order $L^{2}$ estimate for the error of the semidiscrete approximation
defined by the initial-value problem (\ref{eq29})-(\ref{eq210}).
\begin{theorem} Suppose that the solution $(\eta,u)$ of $(\ref{eqs})$ is sufficiently smooth and satisfies 
$\min_{0\leq x\leq 1}\eta(x,t) \geq c_{0}$ for $t\in [0,T]$ for some positive constant $c_{0}$. Suppose that $r\geq 3$ and
that $h$ is sufficiently small. Then, there is a unique solution $(\eta_{h},u_{h})$ of $(\ref{eq29})$-$(\ref{eq210})$ on $[0,T]$, which
satisfies
\begin{equation}
\max_{0\leq t\leq T}(\|\eta(t) - \eta_{h}(t)\| + \|u(t) - u_{h}(t)\|) \leq Ch^{r}.
\label{eq231}
\end{equation}
\end{theorem}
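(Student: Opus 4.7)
The plan is to split the error via the quasiinterpolants $H=Q_h\eta$, $U=Q_h u$ by setting $\theta=\eta_h-H$ and $\xi=u_h-U$, both in $S_h$. Since Lemma~2.1 gives $\|\eta-H\|+\|u-U\|\leq Ch^r$, by the triangle inequality it suffices to show $\|\theta(t)\|+\|\xi(t)\|\leq Ch^r$ on $[0,T]$. Subtracting (\ref{eq29}) from (\ref{eq211}), writing $\eta_h=H+\theta$, $u_h=U+\xi$, and using the key simplification $HU_t-\eta_h u_{h,t}=-\eta_h\xi_t-\theta U_t$ (and its analogue for the $\xi_{tx}$-term), yields error equations of the form
\begin{align*}
(\theta_t,\phi) &= (\psi,\phi) + ((H\xi+U\theta+\theta\xi)_x,\phi), \\
(\eta_h\xi_t,\chi)+\tfrac{1}{3}(\eta_h^3\xi_{tx},\chi') &= -A(\delta,\chi)+\mathcal{L}(\theta,\xi;\chi)+\mathcal{N}(\theta,\xi;\chi),
\end{align*}
for all $\phi,\chi\in S_h$, where $\mathcal{L}$ collects all contributions that are linear in $(\theta,\xi)$ with smooth coefficients built from $H, U$ and their derivatives, and $\mathcal{N}$ collects the quadratic-and-higher-order ones. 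By Proposition~2.1, $\|\psi\|+\|\delta\|_1\leq Ch^r$.

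For local existence and uniqueness of $(\eta_h,u_h)$, I would observe that (\ref{eq29}) is an ODE system in $S_h\times S_h$ whose time-derivative is determined by inverting the bilinear form $B_h(v,w)=(\eta_h v,w)+\tfrac{1}{3}(\eta_h^3 v',w')$, which is coercive on $H^1_{per}$ whenever $\eta_h>0$; standard ODE theory then yields a unique local solution. To extend it to $[0,T]$ I would run a bootstrap: let $t^*\in(0,T]$ be maximal such that $(\eta_h,u_h)$ exists on $[0,t^*)$ with $\|\theta(t)\|_{1,\infty}+\|\xi(t)\|_{2,\infty}\leq 1$. On $[0,t^*)$, Lemma~2.1 and the triangle inequality give $\eta_h\geq c_0/2$ and uniform $W^{2,\infty}$-bounds on $\eta_h, u_h$, so all coefficients in $\mathcal{L},\mathcal{N}$ are uniformly controlled and $\eta_{h,t}=-(\eta_h u_h)_x$ is uniformly bounded in $L^\infty$.

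Next, I would test the first error equation with $\theta$, the second with $\xi$, add, and introduce the discrete energy
\[
E(t)=\tfrac{1}{2}\|\theta\|^2+\tfrac{1}{2}(\eta_h\xi,\xi)+\tfrac{1}{6}(\eta_h^3\xi_x,\xi_x),
\]
which, by positivity and boundedness of $\eta_h$, is equivalent to $\|\theta\|^2+\|\xi\|_1^2$. The product-rule identities $(\eta_h\xi_t,\xi)=\tfrac{1}{2}\tfrac{d}{dt}(\eta_h\xi,\xi)-\tfrac{1}{2}(\eta_{h,t}\xi,\xi)$ and its $\xi_{tx}$-analogue turn the LHS into $\tfrac{d}{dt}E(t)$ modulo $\eta_{h,t}$-dependent weighted terms of size $CE$. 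Among the linear cross terms, the only one apparently requiring the uncontrolled $\|\theta_x\|$ is $(H\theta_x,\xi)$, arising from $HH_x-\eta_h\eta_{h,x}=-H\theta_x-H_x\theta-\theta\theta_x$; periodic integration by parts gives $(H\theta_x,\xi)=-(H_x\xi+H\xi_x,\theta)$, bounded by $CE$. All remaining linear cross terms reduce similarly to weighted $L^2$-products bounded by $CE$. The truncation contribution is $|(\psi,\theta)|+|A(\delta,\xi)|\leq Ch^r\sqrt{E(t)}$. Altogether $\tfrac{d}{dt}E\leq CE+Ch^{2r}$, and Gronwall yields $E(t)\leq Ch^{2r}$ on $[0,t^*)$.

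The main technical obstacle is the bookkeeping of the dispersive nonlinearity $\tfrac{1}{3}(\eta_h^3(u_h u_{h,xx}-u_{h,x}^2),\chi')$: its expansion in $\theta,\xi$ produces numerous contributions that involve $\xi_{xx}$, which is not controlled by $E$. Each such term must be integrated by parts against $\xi_x$, using identities such as $(a\xi\xi_{xx},\xi_x)=-\tfrac{1}{2}((a\xi)_x,\xi_x^2)$, so that only $\|\xi_x\|$ and $L^\infty$-norms of $\xi,\xi_x$ (all controlled by the bootstrap) remain, and no derivative is lost. Once $E(t)\leq Ch^{2r}$ is established, the inverse inequalities of $S_h$ upgrade this to $\|\theta(t)\|_{1,\infty}+\|\xi(t)\|_{2,\infty}\leq Ch^{r-5/2}$; for $r\geq 3$ and $h$ sufficiently small this is strictly less than $1$, so the bootstrap bound is not saturated at $t^*$, and $\eta_h\geq c_0/2$ ensures $(\eta_h,u_h)$ does not cease to exist before $T$. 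Hence $t^*=T$, and the triangle inequality together with Lemma~2.1 yields the desired estimate~(\ref{eq231}).
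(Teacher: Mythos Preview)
Your strategy is essentially that of the paper: compare with the quasiinterpolants $H,U$, test the error equations with $\theta,\xi$, use a Hamiltonian-type weighted $H^1$-energy for $\xi$, close a bootstrap via inverse inequalities, and conclude by Gronwall. The difference is cosmetic: you take $\eta_h$ as the weight in the energy, whereas the paper takes $H$ and carries the correction terms $\tfrac{d}{dt}\!\int\theta\xi^2$ and $\tfrac{d}{dt}\!\int(H^3-\eta_h^3)\xi_x^2$ to the right-hand side, integrates them in $t$, and absorbs them at time $t$ using the bootstrap bound on $\|\theta\|_\infty$. Your choice is in fact a bit cleaner, but it creates one issue you have glossed over.

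You write ``$\eta_{h,t}=-(\eta_h u_h)_x$ is uniformly bounded in $L^\infty$''. This identity is false: from (\ref{eq29}) one only has $\eta_{h,t}=-P_h[(\eta_h u_h)_x]$. The $L^\infty$ bound you need does follow, but it requires the $L^\infty$-stability of the $L^2$-projection onto $S_h$ on a uniform mesh, a nontrivial fact not listed among the preliminaries. Without it, the term $(\eta_h^2\eta_{h,t},\xi_x^2)$ arising from $\tfrac{d}{dt}(\eta_h^3\xi_x,\xi_x)$ cannot be bounded by $CE$: splitting $\eta_{h,t}=H_t+\theta_t$ and using only $\|\theta_t\|\leq C$ under your bootstrap yields a bound of order $C\|\xi_x\|_\infty\|\xi_x\|\leq C\sqrt{E}$, which is not good enough for Gronwall. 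The paper avoids this precisely by including $\|\theta_t\|_\infty$ in its bootstrap hypothesis (Y) and then verifying, after the energy estimate, that $\|\theta_t\|\leq Ch^{r-1}$ (by taking $\phi=\theta_t$ in the first error equation and using the already-proved bounds), whence $\|\theta_t\|_\infty\leq Ch^{r-3/2}$ and the bootstrap closes. You should either invoke the $L^\infty$-stability of $P_h$ explicitly, or add $\|\theta_t\|_\infty$ (equivalently $\|\eta_{h,t}\|_\infty$) to your bootstrap and close it as the paper does.

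Two minor remarks: your bootstrap on $\|\xi\|_{2,\infty}$ is stronger than necessary (you only ever use $\|\xi\|_{1,\infty}$, as does the paper), and your closure exponent $r-5/2$ can be improved to $r-3/2$ by applying the inverse inequality to $\|\xi\|_1$ rather than $\|\xi\|$; either way the argument works for $r\geq 3$.
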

\begin{proof} Clearly the ode initial-value-problem (\ref{eq29})-(\ref{eq210}) has a unique solution locally in $t$. While
this solution exists we let $H=Q_{h}\eta$, $U=Q_{h}u$, $\theta=H-\eta_{h}$, and $\xi=U-u_{h}$. 
Using (\ref{eq29}) and (\ref{eq211}) we have 
\begin{equation}
\begin{aligned}
&(\theta_{t},\phi) + ((HU)_{x} - (\eta_{h}u_{h})_{x},\phi) = (\psi,\phi), 
\quad \forall \phi\in S_{h}, \\
& (HU_{t}-\eta_{h}u_{ht}, \chi) + \tfrac{1}{3}(H^{3}U_{tx}-\eta_{h}^{3}u_{htx},\chi') 
+ (w_{1}+w_{2},\chi)\\
& \,\,\,\,\,\,\,\,\,\,\,\,\,\,\,\,\,\,\,\,\,\,\,\,\,\,\,\,\,\,\,\,\,\,\,\,\,\,\,\,\,\,\,\, 
+\tfrac{1}{3}(w_{3} - w_{4},\chi') = A(\delta,\chi),\quad \forall \chi\in S_{h},
\end{aligned}
\label{eq232}
\end{equation}
where
\begin{align*}
w_{1} & = HH_{x} - \eta_{h}\eta_{hx}, \qquad \qquad \, w_{2} = HUU_{x} - \eta_{h}u_{h}u_{hx},\\
w_{3} & = H^{3}UU_{xx} - \eta_{h}^{3}u_{h}u_{hxx}, 
\quad w_{4} = H^{3}U_{x}^{2} - \eta_{h}^{3}u_{hx}^{3}.
\end{align*}
Since
\[
HU - \eta_{h}u_{h} = H(U-u_{h}) + U(H - \eta_{h}) - (H - \eta_{h})(U - u_{h}),
\]
it follows that
\begin{equation}
HU - \eta_{h}u_{h} = H\xi + U\theta - \theta\xi,
\label{eq233}
\end{equation}
and, consequently, from the first equation in (\ref{eq232}) 
\begin{equation}
(\theta_{t},\phi) + ((H\xi)_{x},\phi) + ((U\theta)_{x},\phi) - ((\theta\xi)_{x},\phi) = (\psi,\phi), \quad \forall \phi\in S_{h}.
\label{eq234}
\end{equation}
From (\ref{eq234}), putting $\phi=\theta$ and using integration by parts, we obtain
\begin{equation}
\tfrac{1}{2}\tfrac{d}{dt}\|\theta\|^{2} + ((H\xi)_{x},\theta) + \tfrac{1}{2}(U_{x}\theta,\theta) - ((\theta\xi)_{x},\theta) = (\psi,\theta).
\label{eq235}
\end{equation}
Now, putting $\chi=\xi$ in the second equation in (\ref{eq232}) we see that 
\small
\begin{equation}
(HU_{t}-\eta_{h}u_{ht},\xi) + \tfrac{1}{3}(H^{3}U_{tx}-\eta_{h}^{3}u_{htx},\xi_{x}) + (w_{1}+w_{2},\xi)
+ \tfrac{1}{3}(w_{3} - w_{4},\xi_{x}) = A(\delta,\xi).
\label{eq236}
\end{equation}
\normalsize
For the first term in the left-hand side of (\ref{eq236}) we have
\[
HU_{t} - \eta_{h}u_{ht} = H(U_{t}-u_{ht}) - (H - \eta_{h})(U_{t} - u_{ht}) + U_{t}(H - \eta_{h}),
\]
that is
\[
HU_{t} - \eta_{h}u_{ht} = H\xi_{t} - \theta\xi_{t} + U_{t}\theta,
\]
and therefore
\begin{equation}
\begin{aligned}
(HU_{t} - \eta_{h}u_{ht},\xi) & = (H\xi_{t},\xi) - (\theta\xi_{t},\xi) + (U_{t}\theta,\xi)  \\
& = \tfrac{1}{2}\tfrac{d}{dt}\int_{0}^{1}H(x,t)\xi^{2}(x,t)dx  - \tfrac{1}{2}(H_{t},\xi^{2}) \\
&\,\,\,\,\,\, -\tfrac{1}{2}\tfrac{d}{dt}\int_{0}^{1}\theta(x,t)\xi^{2}(x,t)dx + \tfrac{1}{2}(\theta_{t},\xi^{2}) + (U_{t}\theta,\xi).
\end{aligned}
\label{eq237}
\end{equation}
For the second term in the left-hand side of (\ref{eq236}) it holds that 
\begin{align*}
H^{3}U_{tx} - \eta_{h}^{3}u_{htx} & = H^{3}(U_{tx}-u_{htx}) + (H^{3}-\eta_{h}^{3})U_{tx} - (H^{3}-\eta_{h}^{3})(U_{tx}-u_{htx})\\
&  = H^{3}\xi_{tx} + (H^{3}-\eta_{h}^{3})U_{tx} - (H^{3}-\eta_{h}^{3})\xi_{tx},
\end{align*}
from which 
\[
(H^{3}U_{tx} - \eta_{h}^{3}u_{htx},\xi_{x}) = (H^{3}\xi_{tx},\xi_{x}) + ((H^{3}-\eta_{h}^{3})U_{tx},\xi_{x}) - ((H^{3}-\eta_{h}^{3})\xi_{tx},\xi_{x}).
\]
Hence,
\begin{equation}
\begin{aligned}
 \tfrac{1}{3}(H^{3}U_{tx} & -\eta_{h}^{3}u_{htx},\xi_{x})  = \tfrac{1}{6}\tfrac{d}{dt}\int_{0}^{1}H^{3}(x,t)\xi_{x}^{2}(x,t)dx 
- \tfrac{1}{2} (H^{2}H_{t},\xi_{x}^{2}) \\
& \,\,\,\,\,\, + \tfrac{1}{3}((H^{3}-\eta_{h}^{3})U_{tx},\xi_{x}) - \tfrac{1}{6}\tfrac{d}{dt}\int_{0}^{1}(H^{3}-\eta_{h}^{3})(x,t)\xi_{x}^{2}(x,t)dx \\ 
& \,\,\,\,\,\, + \tfrac{1}{2}(H^{2}H_{t}-\eta_{h}^{2}\eta_{ht},\xi_{x}^{2}).
\end{aligned}
\label{eq238}
\end{equation}
For $w_{1}$ we have
\begin{align*}
w_{1} = HH_{x} - \eta_{h}\eta_{hx} & = H(H_{x}-\eta_{hx}) + H_{x}(H-\eta_{h})-(H-\eta_{h})(H_{x}-\eta_{hx})\\
& = (H\theta)_{x} - \theta\theta_{x},
\end{align*}
i.e.
\begin{equation}
(w_{1},\xi) = -(H\theta,\xi_{x}) - (\theta\theta_{x},\xi).
\label{eq239}
\end{equation}
Moreover
\[
w_{2} = HUU_{x} - \eta_{h}u_{h}u_{hx} = HU(U_{x} - u_{hx}) + (HU - \eta_{h}u_{h})U_{x}  - (HU - \eta_{h}u_{h})(U_{x} - u_{hx}),
\]
and in view of (\ref{eq233})
\[
w_{2} = HU\xi_{x} + (H\xi + U\theta - \theta\xi)U_{x} - (HU - \eta_{h}u_{h})\xi_{x}.
\]
Thus, integrating by parts,
\begin{equation}
\begin{aligned}
(w_{2},\xi) & = -\tfrac{1}{2}((HU)_{x},\xi^{2}) + (HU_{x},\xi^{2}) + (UU_{x}\theta,\xi) - (U_{x}\theta\xi,\xi) \\
& \,\,\,\,\,\, + \tfrac{1}{2}((HU-\eta_{h}u_{h})_{x},\xi^{2}).
\end{aligned}
\label{eq240}
\end{equation}
Now, since 
\begin{align*}
w_{3} = H^{3}UU_{xx} - \eta_{h}^{3}u_{h}u_{hxx} & = H^{3}UU_{xx} - \eta_{h}^{3}u_{h}(U_{xx} - \xi_{xx})\\
& = \eta_{h}^{3}u_{h}\xi_{xx} + (H^{3}U - \eta_{h}^{3}u_{h})U_{xx},
\end{align*}
and since
\[
H^{3}U - \eta_{h}^{3}u_{h} = H^{3}U - \eta_{h}^{3}(U-\xi) = (H^{3} - \eta_{h}^{3})U + \eta_{h}^{3}\xi,
\]
we get
\[
w_{3} = \eta_{h}^{3}u_{h}\xi_{xx} + (H^{3}-\eta_{h}^{3})UU_{xx} + \eta_{h}^{3}U_{xx}\xi,
\]
and therefore
\begin{equation}
(w_{3},\xi_{x}) = -\tfrac{1}{2}(3\eta_{h}^{2}\eta_{hx}u_{h} + \eta_{h}^{3}u_{hx},\xi_{x}^{2}) + ((H^{3}-\eta_{h}^{3})UU_{xx},\xi_{x})
+ (\eta_{h}^{3}U_{xx}\xi,\xi_{x}).
\label{eq241}
\end{equation}
In addition 
\begin{align*}
w_{4} = H^{3}U_{x}^{2} - \eta_{h}^{3}u_{hx}^{2} & = H^{3}U_{x}^{2} - \eta_{h}^{3}(U_{x}-\xi_{x})^{2} = (H^{3}-\eta_{h}^{3})U_{x}^{2}
+ 2\eta_{h}^{3}U_{x}\xi_{x} - \eta_{h}^{3}\xi_{x}^{2} \\
& = (H^{3} - \eta_{h}^{3})U_{x}^{2} + \eta_{h}^{3}U_{x}\xi_{x} + \eta_{h}^{3}u_{hx}\xi_{x}.
\end{align*}
Hence, from this relation and (\ref{eq241}) it follows that
\begin{equation}
\begin{aligned}
(w_{3}-w_{4},\xi_{x}) & = ((H^{3}-\eta_{h}^{3})(UU_{xx}-U_{x}^{2}),\xi_{x}) 
- \tfrac{3}{2}(\eta_{h}^{2}\eta_{hx}u_{h} + \eta_{h}^{3}u_{hx},\xi_{x}^{2}) 	\\
&\,\,\,\,\,\, + (\eta_{h}^{3}U_{xx}\xi,\xi_{x}) - (\eta_{h}^{3}U_{x},\xi_{x}^{2}).
\end{aligned}
\label{eq242}	
\end{equation}
Noting that
\[
((H\xi)_{x},\theta) - ((\theta\xi)_{x},\theta) - (H\theta,\xi_{x}) - (\theta\theta_{x},\xi) = (H_{x}\xi,\theta),
\]
and taking into account (\ref{eq239}), (\ref{eq237}), (\ref{eq238}), (\ref{eq240}), and (\ref{eq242}), if we add (\ref{eq235}) and (\ref{eq236})
we obtain 
\begin{equation}
\begin{aligned}
\tfrac{1}{2}\tfrac{d}{dt} & \|\theta\|^{2} + \tfrac{1}{2}\tfrac{d}{dt}\int_{0}^{1}[H(x,t)\xi^{2}(x,t) + \tfrac{1}{3}H^{3}(x,t)\xi_{x}^{2}(x,t)]dx\\
& = \tfrac{1}{2}\tfrac{d}{dt}\int_{0}^{1}\theta(x,t)\xi^{2}(x,t)dx 
 + \tfrac{1}{6}\tfrac{d}{dt}\int_{0}^{1}(H^{3}-\eta_{h}^{3})(x,t)\xi_{x}^{2}(x,t)dx 	\\
&\,\,\,\,\,\, + \wt{w}_{1} + \wt{w}_{2} + \wt{w}_{3} + \wt{w}_{4}, 
\end{aligned}
\label{eq243}	
\end{equation}
where
\begin{align*}
\wt{w}_{1} & = -(H_{x}\xi,\theta) - \tfrac{1}{2}(U_{x}\theta,\theta), \quad \wt{w}_{2}=\tfrac{1}{2}(H_{t},\xi^{2})-\tfrac{1}{2}(\theta_{t},\xi^{2})
- (U_{t}\theta,\xi),\\
\wt{w}_{3} & = \tfrac{1}{2}(H^{2}H_{t},\xi_{x}^{2}) - \tfrac{1}{3}((H^{3}-\eta_{h}^{3})U_{tx},\xi_{x}) 
- \tfrac{1}{2}(H^{2}H_{t}-\eta_{h}^{2}\eta_{ht},\xi_{x}^{2}), \\
\wt{w}_{4} & = -(w_{2},\xi) - \tfrac{1}{3}(w_{3}-w_{4},\xi_{x}) + (\psi,\theta) + A(\delta,\xi). 	
\end{align*}
From (\ref{eq27}) if follows that 
\begin{equation}
|\wt{w}_{1}| \leq C \|\theta\| \|\xi\| + C \|\theta\|^{2}.
\label{eq244}	
\end{equation}
Taking into account (\ref{eq210}), (\ref{eq25}), (\ref{eq26}), a straightforward estimate for $\theta_{t}$ that we get from (\ref{eq234})
with $\phi = \theta$, and arguing by continuity, we conclude that there is a maximal time $t_{h}\in (0,T]$ such that the solution
of (\ref{eq29})-(\ref{eq210}) exists for $0\leq t\leq t_{h}$ and satisfies 
\begin{equation}{\tag{Y}}
\max_{0\leq s\leq t_{h}}(\|\theta_{t}(s)\|_{\infty} + \|\theta(s)\|_{1,\infty} + \|\xi(s)\|_{1,\infty}) \leq 1.
\label{eqy}	
\end{equation}
Then, from (\ref{eqy}) and (\ref{eq27}) it follows for $0\leq t\leq t_{h}$ that
\begin{equation}
|\wt{w}_{2}| \leq C\|\xi\|^{2} + C\|\theta\| \|\xi\|,
\label{eq245}	
\end{equation}
To derive a bound for $\wt{w}_{3}$, note that $H^{3} - \eta_{h}^{3}=\theta(H^{2}+H\eta_{h} + \eta_{h}^{2})$, and therefore that
\[
\wt{w}_{3} = -\tfrac{1}{3}(H^{2}U_{tx}\theta,\xi_{x}) - \tfrac{1}{3}(HU_{tx}\eta_{h}\theta,\xi_{x}) \\
 - \tfrac{1}{3}(U_{tx}\eta_{h}^{2}\theta,\xi_{x}) + \tfrac{1}{2}(\eta_{h}^{2}\eta_{ht},\xi_{x}^{2}). 	
\]
Since $\|\eta_{h}\|_{1,\infty} \leq \|\theta\|_{1,\infty} + \|H\|_{1,\infty}$, it follows from (\ref{eqy}) and (\ref{eq27}) that 
$\|\eta_{h}\|_{1,\infty}\leq C$ for $0\leq t\leq t_{h}$. Similarly, $\|\eta_{ht}\|_{\infty} \leq C$ for $0\leq t\leq t_{h}$. Therefore, using 
again (\ref{eq27}) we conclude for $0\leq t\leq t_{h}$ that
\begin{equation}
|\wt{w}_{3}| \leq C(\|\theta\| \|\xi_{x}\| + \|\xi_{x}\|^{2}).
\label{eq246}	
\end{equation}
For $\wt{w}_{4}$, using the definitions of $w_{2}$, $w_{3}$, $w_{4}$, noting as before that (\ref{eqy}) implies 
$\|u_{h}\|_{1,\infty}\leq C$  for $0\leq t\leq t_{h}$, and using again (\ref{eqy}), (\ref{eq27}), the identity 
$H^{3}-\eta_{h}^{3} = \theta (H^{2} + H\eta_{h} + \eta_{h}^{2})$, and (\ref{eq212}), gives for $0\leq t\leq t_{h}$
\begin{equation}
|\wt{w}_{4}|\leq C[\|\xi\|^{2} + \|\theta\| \|\xi\| + \|\theta\| \|\xi_{x}\| + \|\xi_{x}\|^{2} + \|\xi\| \|\xi_{x}\|
+ h^{r} (\|\theta\| + \|\xi\|_{1})].
\label{eq247}	
\end{equation}
Therefore, from (\ref{eq243}), and taking into account (\ref{eq244})-(\ref{eq247}), we obtain for $0\leq t\leq t_{h}$ that
\begin{align*}
\tfrac{1}{2}\tfrac{d}{dt}\|\theta(t)\|^{2} & + \tfrac{1}{2}\tfrac{d}{dt}\int_{0}^{1}[H(x,t)\xi^{2}(x,t) + H^{3}(x,t)\xi_{x}^{2}(x,t)]dx
\leq 	\tfrac{1}{2}\tfrac{d}{dt}\int_{0}^{1}\theta(x,t)\xi^{2}(x,t)dx \\
& \,\,\,\,\,\, + \tfrac{1}{6}\tfrac{d}{dt}\int_{0}^{1}(H^{3} - \eta_{h}^{3})(x,t)\xi_{x}^{2}(x,t)dx + C(\|\theta(t)\| + \|\xi(t)\|_{1})^{2}
+ Ch^{2r}.
\end{align*}
Integrating both sides of this inequality with respect to $t$ and taking again into account (\ref{eqy}) and noting that 
$\theta(0)=\xi(0)=0$, yields for $t\leq t_{h}$ that
\begin{equation}
\begin{aligned}	
\tfrac{1}{2}\|\theta(t)\|^{2} & + \tfrac{1}{2}\int_{0}^{1}[H(x,t)\xi^{2}(x,t)+\tfrac{1}{3}H^{3}(x,t)\xi_{x}^{2}(x,t)]dx \\
& \,\,\,\,\, \leq C\int_{0}^{t}(\|\theta(s)\|^{2} + \|\xi(s)\|_{1}^{2})ds + Ch^{2r}.
\end{aligned}
\label{eq248}	
\end{equation}
The left-hand side of this inequality is a sort of discrete analog of the Hamiltonian, cf. (\ref{eq16}), appropriate for the
periodic Serre system. From our
hypothesis that $\min_{0\leq x\leq 1}\eta(x,t)\geq c_{0}>0$ for each $t\in [0,T]$ and (\ref{eq28}) of Lemma 2.1, we conclude
from (\ref{eq248}) that
\[
\|\theta(t)\|^{2} + \|\xi(t)\|_{1}^{2} \leq C_{0} \int_{0}^{t}(\|\theta(s)\|^{2} + \|\xi(s)\|_{1}^{2})ds + C_{0}h^{2r},
\]
holds for $0\leq t\leq t_{h}$ for a constant $C_{0}$ independent of $t_{h}$ and $h$. From this relation and
Gronwall's lemma it follows that 
\begin{equation}
\|\theta(t)\| + \|\xi(t)\|_{1} \leq C_{1}h^{r}, \quad \mbox{for} \quad t\leq t_{h},
\label{eq249} 	
\end{equation}
for a constant $C_{1}=C_{1}(T)$ independent of $t_{h}$ and $h$. This inequality and the 
inverse properties of $S_{h}$ yield that
$\|\theta\|_{1,\infty} \leq Ch^{r-3/2}$ and $\|\xi\|_{1,\infty}\leq Ch^{r-1/2}$ 
for $t\leq t_{h}$. In addition, it is straightforward
to see that taking $\phi=\theta_{t}$ in (\ref{eq234})  and using (\ref{eq249})  
gives $\|\theta_{t}\|\leq Ch^{r-1}$ and, therefore, that 
$\|\theta_{t}\|_{\infty}\leq Ch^{r-3/2}$ for $0\leq t\leq t_{h}$. We conclude that 
$\|\theta_{t}\|_{\infty} + \|\theta\|_{1,\infty} + \|\xi\|_{1,\infty} \leq Ch^{r-3/2}$ for 
$0\leq t\leq t_{h}$. This implies, provided $h$ was taken sufficiently small, that $t_{h}$ was 
not maximal in (\ref{eqy}). Hence, we may take $t_{h}=T$, and (\ref{eq231}) follows from 
(\ref{eq249}) and (\ref{eq25}). 
\end{proof}
\subsection{Remarks} (i) It is straightforward to see that the error estimate (\ref{eq231}) still holds if we take any initial 
condition $\eta_{h}(0)\in S_{h}$ in (\ref{eq210}) that satisfies $\|\eta_{h}(0) - \eta_{0}\|\leq Ch^{r}$, e.g. the
$L^{2}$ projection or the interpolant of $\eta_{0}$ on $S_{h}$. Then (\ref{eq21}) implies that $\|\theta(0)\|\leq Ch^{r}$, and 
one may easily check that (\ref{eqy}) is still valid for some $t_{h}\in(0,T]$, and that (\ref{eq248}), and therefore (\ref{eq249}),
still hold. The conclusion of Theorem 2.3 follows. \\
(ii) The error estimate (\ref{eq231}) is still valid if we choose as initial condition $u_{h}(0)$ an `elliptic' projection of $u_{0}$
on $S_{h}$ defined in terms of the bilinear form 
\[
B_{Serre}(\chi,\phi;\eta_{0}):= (\eta_{0}\chi,\phi) + \tfrac{1}{3}(\eta_{0}^{3}\chi',\phi'), \quad \chi,\phi \in S_{h}.
\]
Indeed, if $v_{h}\in S_{h}$ is the unique (since $\eta_{0}\geq c_{0}>0$) function in $S_{h}$ for which 
\begin{equation}
(\eta_{0}v_{h},\phi) + \tfrac{1}{3}(\eta_{0}^{3}v_{h}',\phi') = (\eta_{0}u_{0},\phi) + \tfrac{1}{3}(\eta_{0}^{3}u_{0}',\phi'), \quad 
\forall 	\phi \in S_{h},
\label{eq250}
\end{equation}
then 
\begin{equation}
\|Q_{h}u_{0} - v_{h}\|_{1} \leq Ch^{r},
\label{eq251}	
\end{equation}
i.e. $v_{h}$ is superoptimally close to $Q_{h}u_{0}$ in $H_{per}^{1}$. 
To see this, note that if $\ve_{h}=Q_{h}u_{0} - v_{h}$ and $e=Q_{h}u_{0} - u_{0}$, then 
\begin{equation}
(\eta_{0}\ve_{h},\phi) + \tfrac{1}{3}(\eta_{0}^{3}\ve_{h}',\phi') = (\eta_{0}e,\phi) + \tfrac{1}{3}(\eta_{0}^{3}e',\phi'), \quad 
\forall \phi \in S_{h}.
\label{eq252}	
\end{equation}
Define now $\gamma\in S_{h}$ by the equation
\begin{equation}
(\gamma,\phi) = (\eta_{0}e,\phi) + \tfrac{1}{3}(\eta_{0}^{3}e',\phi'), \quad \forall \phi\in S_{h}.
\label{eq253}	
\end{equation}
Using the properties of the quasiinterpolant, and in particular (\ref{eq24}), 
we have for $1\leq i\leq N$
\[
(\eta_{0}e,\phi_{i}) = (\eta_{0}Q_{h}u_{0}-\eta_{0}u_{0},\phi_{i}) 
= (Q_{h}(\eta_{0}u_{0}) - \eta_{0}u_{0},\phi_{i}) + \beta_{i}^{(1)},
\] 
where $\max_{1\leq i\leq N}|\beta_{i}^{(1)}| \leq Ch^{2r+1}$. Similarly,
\[
(\eta_{0}^{3}e',\phi_{i}') = - \bigl(Q_{h}\bigl[(\eta_{0}^{3}u_{0}')'\bigr]-(\eta_{0}^{3}u_{0}')',\phi_{i}\bigr) + \beta_{i}^{(2)},
\]
where $\max_{1\leq i\leq N}|\beta_{i}^{(2)}| \leq Ch^{2r-1}$. We conclude, in view of (\ref{eq253}), (\ref{eq21}), and the remark
following (\ref{eq22}), that 
\[
\|\gamma\| \leq C(h^{r} + h^{2r-2}) \leq Ch^{r}.
\]
Therefore, since (\ref{eq252}) and (\ref{eq253}) imply
\[
(\eta_{0}\ve_{h},\phi) + \tfrac{1}{3}(\eta_{0}^{3}\ve_{h}',\phi') = (\gamma,\phi), \quad \forall \phi \in S_{h},
\]
putting $\phi = \ve_{h}$ yields $\|\ve_{h}\|_{1} \leq C\|\gamma\| \leq Ch^{r}$, 
i.e. that (\ref{eq251})  is valid. \par
We take now $u_{h}(0) = v_{h}$ as initial condition in (\ref{eq210}). Note that 
(\ref{eqy}) still holds for some $t_{h}\in(0,T]$,
since $\|\xi(0)\|_{1}= \|Q_{h}u_{0} - v_{h}\|_{1} \leq Ch^{r}$ and therefore 
$\|\xi(0)\|_{1,\infty} \leq Ch^{r-1/2}$. Moreover
(\ref{eq248}) and (\ref{eq249})  still hold and the conclusion of Theorem 2.3 follows. From this and the previous remark
it is clear that the usual B-spline basis may be used in the finite element computations, i.e. that there is no need of
computing with the special basis $\{\phi_{i}\}_{i=1}^{N}$.
\section{Numerical experiments} In this section we present the results of some numerical 
experiments that
we performed to approximate solutions of the periodic initial-value problem for the Serre equations
using the standard Galerkin semidiscretization (\ref{eq29}) with the spatial interval taken to be of 
the form $[-L,L]$, so that $h=2L/N$. We generally used cubic splines, i.e. $S_{h}$ with $r=4$, and 
computed the initial values $\eta_{h}(0)$, $u_{h}(0)$ as the $L^{2}$ projections of $\eta_{0}$, $u_{0}$
on $S_{h}$. The semidiscrete initial-value problem was discretized in time by the `classical', explicit,
4$^{th}$-order accurate Runge-Kutta scheme, with a uniform time step denoted by $k$.
This fully discrete scheme was used in simulations of solutions of the Serre system in 
\cite{MID}. Numerical evidence from \cite{MID} and the present work, and also
theoretical and numerical evidence from \cite{AD1} and \cite{AD2} in the case of the
`classical' Boussinesq system, a close relative of the Serre equations, suggests that the 
fully discrete method
under considerations is fourth-order accurate in the temporal variable and stable under a 
Courant number restriction of the form $k/h\leq r_{0}$.\par
We checked the accuracy of the fully discrete scheme by taking as solution of the Serre
system the solitary wave, \cite{S}, given by $\eta_{S}(x-x_{0}-ct)$,
$u_{S}(x - x_{0}-ct)$, where $c > 1$ and
\begin{equation} 
\begin{aligned}
\eta_{S}(\xi) & = 1 + A_{S}\mathrm{sech}^{2}(K\xi),\quad 
u_{S}(\xi) & = c\bigl( 1 - \frac{1}{\eta_{S}(\xi)}\bigr),\\
A_{S}= & \,\,c^{2} - 1, \quad  K =\sqrt{\frac{3A_{S}}{4c^{2}}}\,.	
\end{aligned}
\label{eq31}	
\end{equation}
In the numerical experiments we took $c=1.2$ and integrated on the spatial interval 
$[-150,150]$ up to $T=100$. (The solution is effectively periodic; initially the solitary 
wave was centered at $x_{0}=-100$ and the value of $\eta_{S}$ at $x=\pm150$ differed 
from $1$ by an amount smaller than the machine epsilon.) We took $h = 2L/N=300/N$,
$k=T/M$ and computed with a fixed ratio $k/h=0.1$ for increasing $N$. In the case of
cubic spline space discretization the numerical convergence rates in the $L^{2}$ and
$L^{\infty}$ norms approached four, while those in the $H^{1}$ and $H^{2}$ norms three
and two, respectively. (The order of magnitude of the $L^{2}$ errors for $\eta$ ranged
from $10^{-6}$ for $N = 600$ to $10^{-10}$ for $N = 6000$). The observed optimal-order 
$L^{2}$ and $L^{\infty}$ rates also suggest that there is no temporal order reduction in
the scheme. It should also be noted that for this experiment the relative errors (with
respect to their initial values) of two invariants of the problem, namely the energy
(Hamiltonian) $E$, defined by the analogous to (\ref{eq16}) formula on $[-L,L]$, and
the momentum $I = \int_{-L}^{L}\eta udx$, ranged, at $T=100$, from $O(10^{-7})$ for
$N = 600$ to $O(10^{-12})$ for $N = 6000$ in the case  of $E$, and from $O(10^{-8})$
for $N = 600$ to $O(10^{-13})$ for $N = 3750$ for $I$. (The mass $\int_{-L}^{L}\eta dx$ was
preserved of course to machine epsilon.) We conclude that the outcome of these experiments
confirms that of the analogous computations in \cite{MID}. \par
In the case of quadratic splines the numerical convergence rates in the $L^{2}$ 
and $L^{\infty}$ norms approached three as $N$ increased, while those corresponding to the
$H^{1}$ and $H^{2}$ norms approached two and one, respectively, as expected. The $L^{2}$
errors for $\eta$ ranged from $O(10^{-5})$ for $N = 600$ to $O(10^{-8})$ for $N = 6000$,
while the relative errors of the invariants ranged, for the same values of $N$, from
$O(10^{-6})$ to $O(10^{-10})$ for $E$ and from $O(10^{-7})$ to $O(10^{-12})$ for $I$.
It was also noted that at the spatial nodes $x_{i}$ the errors of the scheme with 
quadratic splines appeared to be $O(h^{4})$, i.e. superconvergent. \par
As noted above, the fully discrete scheme requires for stability a bound on the Courant 
number $k/h$. In this example, in the case of cubic splines the $L^{2}$ errors were of
the same order of magnitude up to about $k/h=1/2$, increased slowly for larger values
of $k/h$, and the scheme became unstable for $k/h\geq 3$. For quadratic splines, the
$L^{2}$ errors preserved their order of magnitude up to $k/h = 1$ and increased slowly
afterwards until violent instability occurred when $k/h\geq 8$. \par
These numerical results, in addition to many other similar ones that we obtained by
integrating with this scheme {\emph{cnoidal-wave}} solutions of the Serre system and also
`artificial' solutions of the nonhomogeneous equations, and in addition to similar 
results of \cite{MID}, confirm the good accuracy and stability of the scheme and give us
confidence in using it to simulate properties of the solitary waves of the Serre
equations in the sequel.
\subsection{Remarks on solitary waves} Since the numerical experiments of the two following
subsections will focus on properties of the solitary waves of the Serre equations, we make
here some observations comparing them to the solitary waves of the `classical' Boussinesq
system (CB) and the Euler equations. \par
By (\ref{eq31}) we have for the solitary wave of the Serre equations with speed $c>1$ that
\begin{equation}
\begin{aligned}
\zeta(\xi) & = A_{S} \mathrm{sech}^{2}(K\xi), \quad \xi \in \mathbb{R},\\
A_{S} & = c^{2}-1, \quad K = \sqrt{3A_{S}/4c^{2}}.
\end{aligned}
\label{eq32}
\end{equation}	
(Note, incidentally, that the speed-amplitude relation, $c = \sqrt{1 + A_{S}}$, coincides
with Scott-Russell's empirical formula.) In the case of CB there exist no closed-form
solutions for the solitary waves but one may easily prove, cf. \cite{AD1}, that the speed of 
the solitary wave of amplitude $A_{CB}$ is given by the formula 
\begin{equation}
c^{2} = \frac{6(1+A_{CB})^{2}}{3 + 2A_{CB}}\cdot 
\frac{(1+A_{CB})\ln(1+A_{CB})-A_{CB}	}{A_{CB}^{2}}.
\label{eq33}
\end{equation}
Letting
\[
f(x) = \frac{6(1+x)^{2}}{3+2x}\cdot\frac{(1+x)\ln(1+x)-x}{x^{2}} - 1, \quad x>0,
\]
one may see by straightforward calculus arguments that $f$ is continuous on $[0,\infty)$, 
$f(0)=0$, and $f$ is monotonically increasing on $[0,\infty)$. Moreover, $f(x)<x$
for $x>0$, from which we infer that for solitary waves of the CB and the Serre systems
of the same speed $c>1$, since $A_{S} = c^{2}-1 = f(A_{CB})$, it holds that
\begin{equation}
A_{S} < A_{CB}.
\label{eq34}	
\end{equation}
Since for $x$ small we have 
$f(x) = x - \tfrac{1}{6}x^{2}+\tfrac{1}{90}x^{3} + \tfrac{7}{240}x^{4} + O(x^{5})$, inverting
the series, we get for $y = c^{2}-1$, $c>1$, $y$ small, that
\begin{equation}
A_{CB} = y + \tfrac{1}{6}y^{2} + \tfrac{2}{45}y^{3} - \tfrac{13}{1080}y^{4} + O(y^{5}).
\label{eq35}	
\end{equation}
The analog of the expansion of $c^{2} - 1$ in terms of the amplitude of the solitary
wave of the Euler equations has been derived e.g. by Long, \cite{Lg}, equation ($46$).
Inverting Long's expansion we obtain for $y = c^{2} -1$, $c>1$, $y$ small,
\begin{equation}
A_{Euler} = y + \tfrac{1}{20}y^{2} + \tfrac{67}{1400}y^{3} 
+ \tfrac{73}{1600}y^{4} + O(y^{5}).	
\label{eq36}
\end{equation}
(To our knowledge, convergence of this expansion has not been proved.) Comparing
(\ref{eq35}) and (\ref{eq36}) with the exact relation
\begin{equation}
A_{S} = y,
\label{eq37}
\end{equation}
we see that for $c$ close to $1$ $A_{S} < A_{Euler} < A_{CB}$ and that
$|A_{Euler} - A_{S}| < |A_{Euler} - A_{CB}|$. These observations are confirmed by 
Figure \ref{fig30} in which the solitary waves 
\begin{figure}[ht!] 
\centering
\vspace{0.4cm}
\includegraphics[scale=0.69]{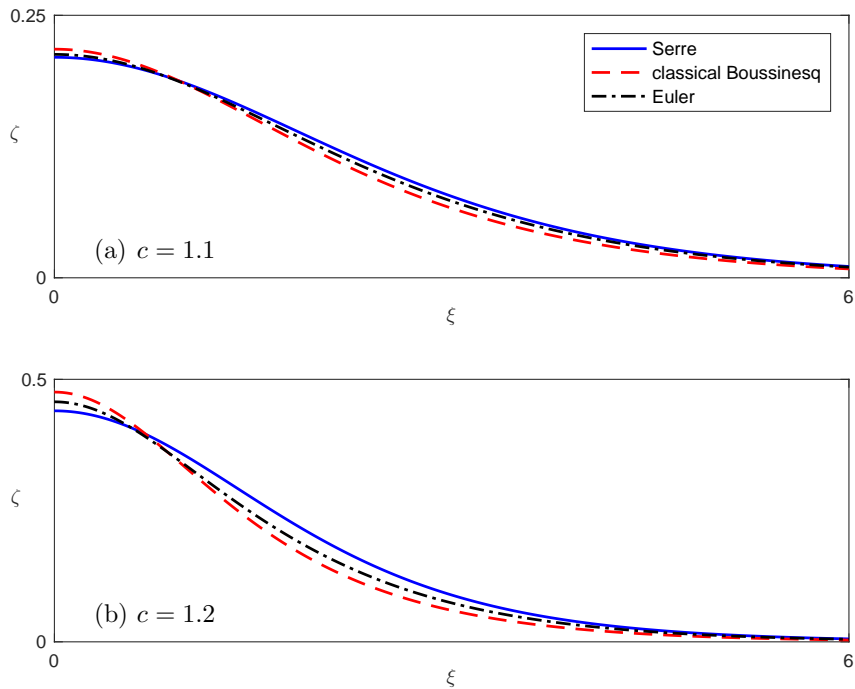}
\caption{Profiles of solitary waves for $\xi=x-ct$ 
(only the right-hand halves of the waves are shown) for the Serre, CB, and Euler equations,
for speeds (a) $c=1.1$, (b) $c = 1.2$.}
\label{fig30}
\end{figure} 
for the Euler equations (computed by the numerical method of \cite{DC}) and the 
`classical' Boussinesq system (computed by a spectral spatial discretization and the
Petviashvili nonlinear system solver as in \cite{AlD}) are compared for $c=1.1$ and $c=1.2$
to the solitary waves of the Serre system. In Figure \ref{fig30} the numerical values
are 
\begin{align*}
A_{S} & = 0.21, \quad A_{Euler} = 0.21276, \quad A_{CB} = 0.21774 
\quad\, \text{for}\quad c=1.1, \\
A_{S} & = 0.44, \quad A_{Euler} = 0.45715, \quad A_{CB} = 0.47573 
\quad\, \text{for}\quad c=1.2.
\end{align*}
Our numerical evidence agrees with the analogous results of Li {\em{et al.}}, 
\cite{LHC}, where a comparative study was made by numerical means of solitary waves of the
Euler equations, the CB system, the KdV equation and the Serre equations (called the
Su-Gardner equations). A general conclusion from \cite{LHC} is that up to speeds (in the
variables of the paper at hand) of about $1.2$ the amplitude and mass, i.e. the integral
$\int\eta dx$, of the Serre solitary wave, are closer to the analogous quantities of the 
solitary wave of the Euler equations than their CB counterparts. For larger speeds
the solitary waves of both long-wave models are not accurate approximations of the 
solitary wave of the Euler equations. 
\subsection{Resolution of initial profiles into solitary waves} It is well known that solitary waves play
a distinguished role in the evolution and long-time behavior of solutions of the initial-value problem for 
the associated 
nonlinear dispersive systems that emanate from initial data that decay suitably fast at infinity. This
property, of the {\em resolution} of general initial data into sequences of solitary waves followed by
slower oscillatory dispersive tails of small amplitude, has been rigorously proved for integrable one-way
models such as the KdV equation and observed numerically in the case of many other examples of nonlinear
dispersive wave equations that possess solitary waves. 
In particular, this property has been observed in
the case of the Serre equations in numerical experiments in \cite{LHC}, \cite{MID}, and \cite{EGS}. \par
In order to complement these studies we integrated the Serre system with our fully discrete scheme 
(using cubic splines for the spatial discretization) on the interval $[-300,300]$, with $h=0.1$ and $k=0.01$.
\begin{figure}[b!] 
\centering
\vspace{0.4cm}
  \includegraphics[scale=0.65]{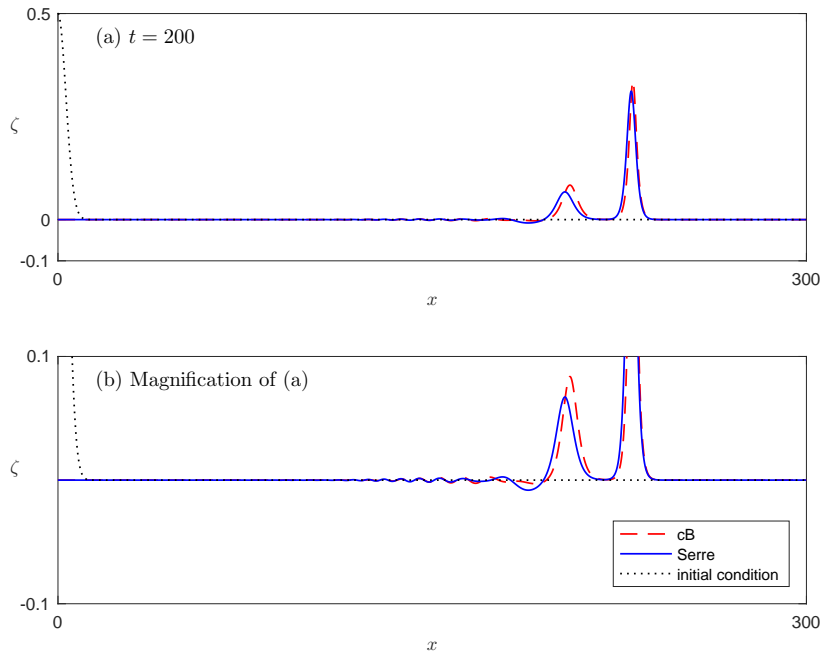}
  \caption{Resolution of a Gaussian into solitary waves ($a=0.5$, $b=0.05$), Serre and CB systems.}
  \label{fig31}
\end{figure} 
We studied the evolution of initial Gaussian profiles of the form $\zeta(x,0)=ae^{-bx^{2}}$, $u(x,0)=0$
(recall that $\eta = 1 + \zeta$), and compared with the analogous evolution of solutions of the `classical'
\begin{figure}[t!]
 \centering
\vspace{0.2cm}
  \includegraphics[scale=0.65]{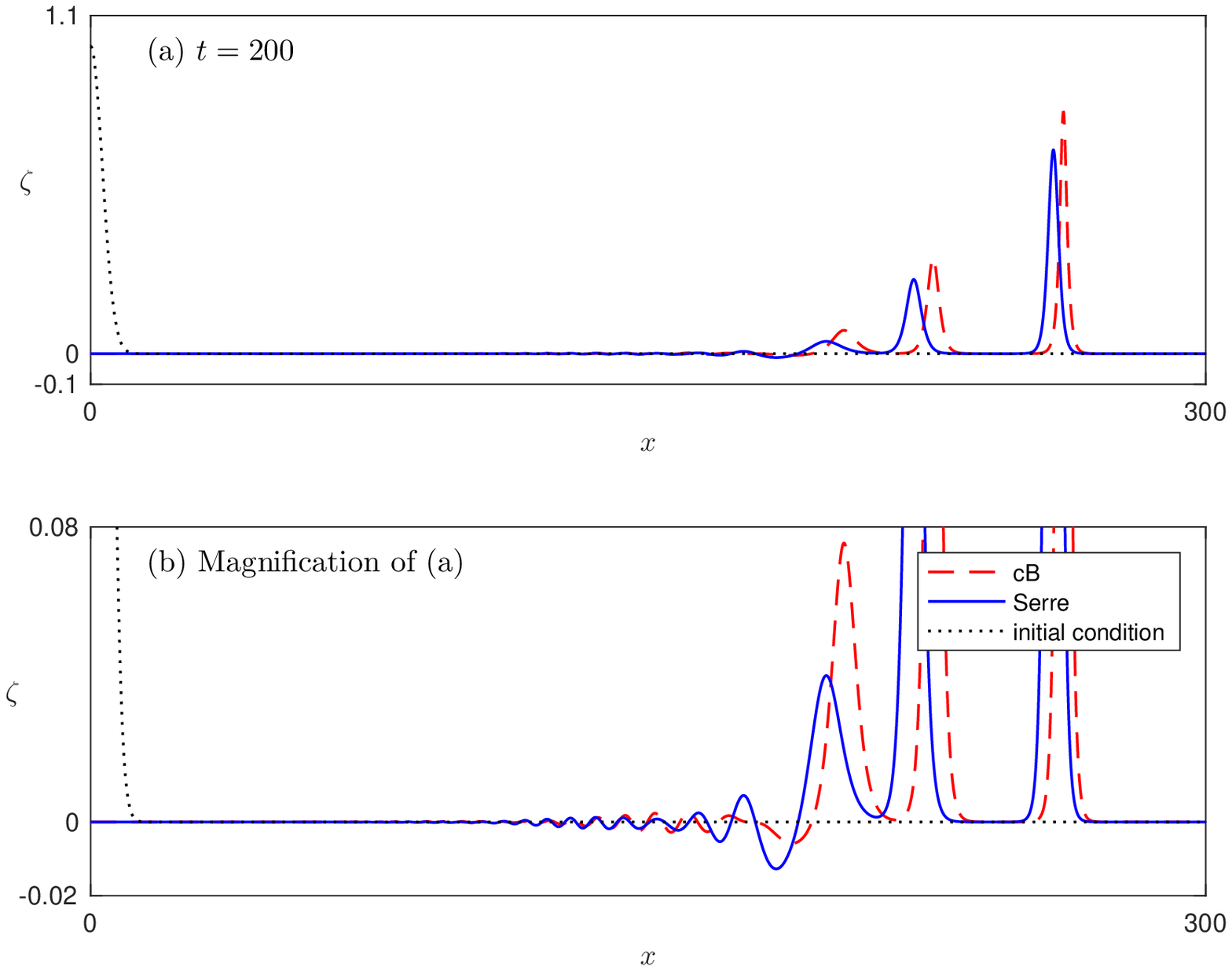}
  \caption{Resolution of a Gaussian into solitary waves ($a=1$, $b=0.05$), Serre and CB systems.}
  \label{fig32}
\end{figure}
\begin{figure}[h!]
  \centering
  \vspace{0.2cm}
  \includegraphics[scale=0.65]{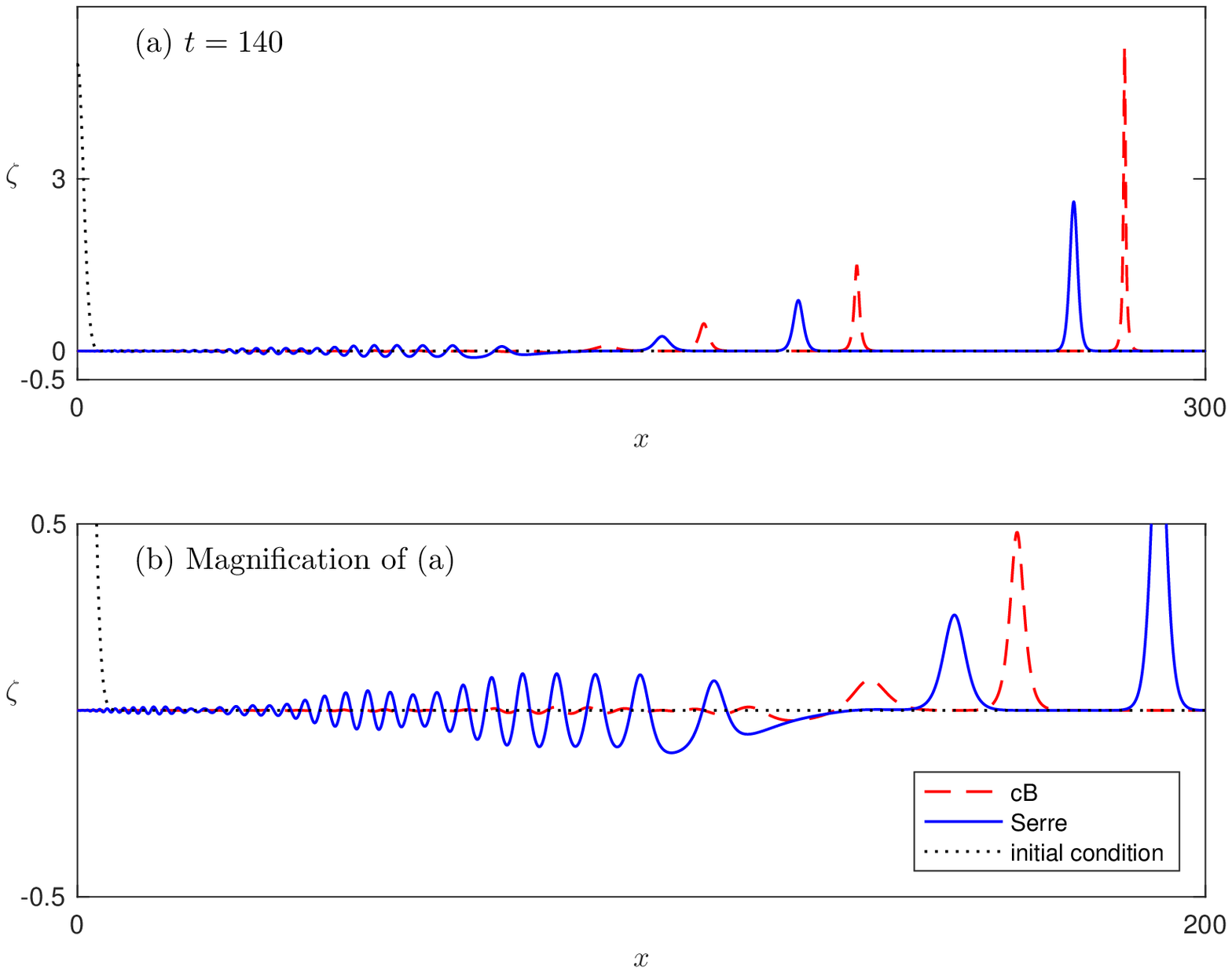}
  \caption{Resolution of a Gaussian into solitary waves ($a=5$, $b=0.2$), Serre and CB systems.}
  \label{fig33}
\end{figure}
Boussinesq system (CB) with the same initial conditions. The initial profile evolves, after
some time,
into two symmetric wavetrains that travel in opposite directions and consist of a number of
solitary waves
plus a trailing dispersive tail. (In Figures \ref{fig31}-\ref{fig33} only 
the rightwards-travelling part of the solution is shown.) 
In Figure \ref{fig31} one may observe the solution emanating from the Gaussian with $a=0.5$, 
$b=0.05$; by $t=200$ two solitary waves have been formed for both systems.
Figures \ref{fig32} and \ref{fig33} show analogous 
resolution profiles produced by 
Gaussians with 
  $a=1$, $b=0.05$, and $a=5$, $b=0.2$, at $t=200$ and $t=140$, respectively.
By these temporal values
 three solitary waves have emerged for both
systems in both cases. (The magnified graph in Figure \ref{fig33}
shows the smallest generated solitary wave pulses of both systems and the base of the next larger solitary
wave of the Serre system, and provides a clear view of the dispersive tails of the two wavetrains.) We 
observe that the emerging
solitary waves of the Serre system have smaller speeds and amplitudes than their companion 
CB solitary waves. The oscillations of the dispersive tail behind the Serre solitary wavetrains are much 
larger in amplitude and number than those of the dispersive tail in the CB case, which is a nonlinear 
feature of the Serre system as the linearized equations of both systems coincide. It might be of interest 
to point out that we ran several experiments with Gaussian initial profiles varying the parameters $a$ and 
$b$; in each case we noted that the number of solitary waves $N_{s}$ produced by about $t=200$ was the same 
for both systems. In particular, the results of Table 1 of \cite{ADM1} apparently hold for the Serre system 
as well and therefore $N_{s}$ appears to be proportional to $\int\sqrt{\eta_{0}-1} \sim (a/b)^{1/2}$ as in 
the case of the KdV equation, \cite{W}, and as predicted by asymptotic analysis for the Serre system in 
\cite{EGS}. 
\subsection{Overtaking collisions of solitary waves} As is well known, when two solitary waves of many 
nonlinear dispersive systems collide, the solitary waves that emerge remain largely unchanged, having in
general slightly different amplitudes and speeds and undergoing small phase shifts. With the exception
of integrable equations, such as the KdV, the collisions are inelastic, producing in addition 
small-amplitude 
dispersive oscillatory tails after the interactions. \par 
In the case of the Serre equations head-on collisions of solitary waves have appeared in the literature,
cf. e.g. \cite{MID} and its references. In this section we shall focus on {\em overtaking collisions}
of solitary waves of different speeds propagating in the same direction. The highly accurate numerical
scheme tested in section 3.1 affords a detailed simulation of this type of 
interactions. Overtaking 
collisions for the Serre system have been previously studied numerically in \cite{MS}, \cite{LHC},
\cite{MID}. For analogous studies in the case of Boussinesq systems cf. e.g. \cite{ADM1}, \cite{AD1}.
A thorough study of collisions of solitary waves, and in particular of overtaking collisions, by
numerical and experimental means has been performed by Craig {\em et al.} \cite{CGHHS} in the case of the
Euler equations. \par 
We studied the overtaking collisions by integrating the Serre equations with our fully discrete method using
cubic splines on the spatial interval $[-400,400]$ taking $h=0.1$ and $k=0.01$. The initial conditions
were two solitary waves of the form (\ref{eq31}), of which the larger one, of amplitude $A_{S}=a_{1}$, was
centered at $x=-30$, while the smaller, of amplitude $a_{2} < a_{1}$, was centered 
at $x=30$. In the sequel we let 
$r=a_{1}/a_{2}$, and, unless otherwise indicated, we took $a_{1}=1$ and varied $a_{2}$. \par 
For small values of $r$, specifically for $r \leq 3.0967$, we observed that during the collision the 
amplitude of the larger solitary wave decreases monotonically while that of the smaller one increases
also monotonically. At the end of the interaction the two waves have exchanged the order of their 
positions and in 
\begin{figure}[ht!]
  \centering
  \vspace{0.4cm}
   \includegraphics[scale=0.67]{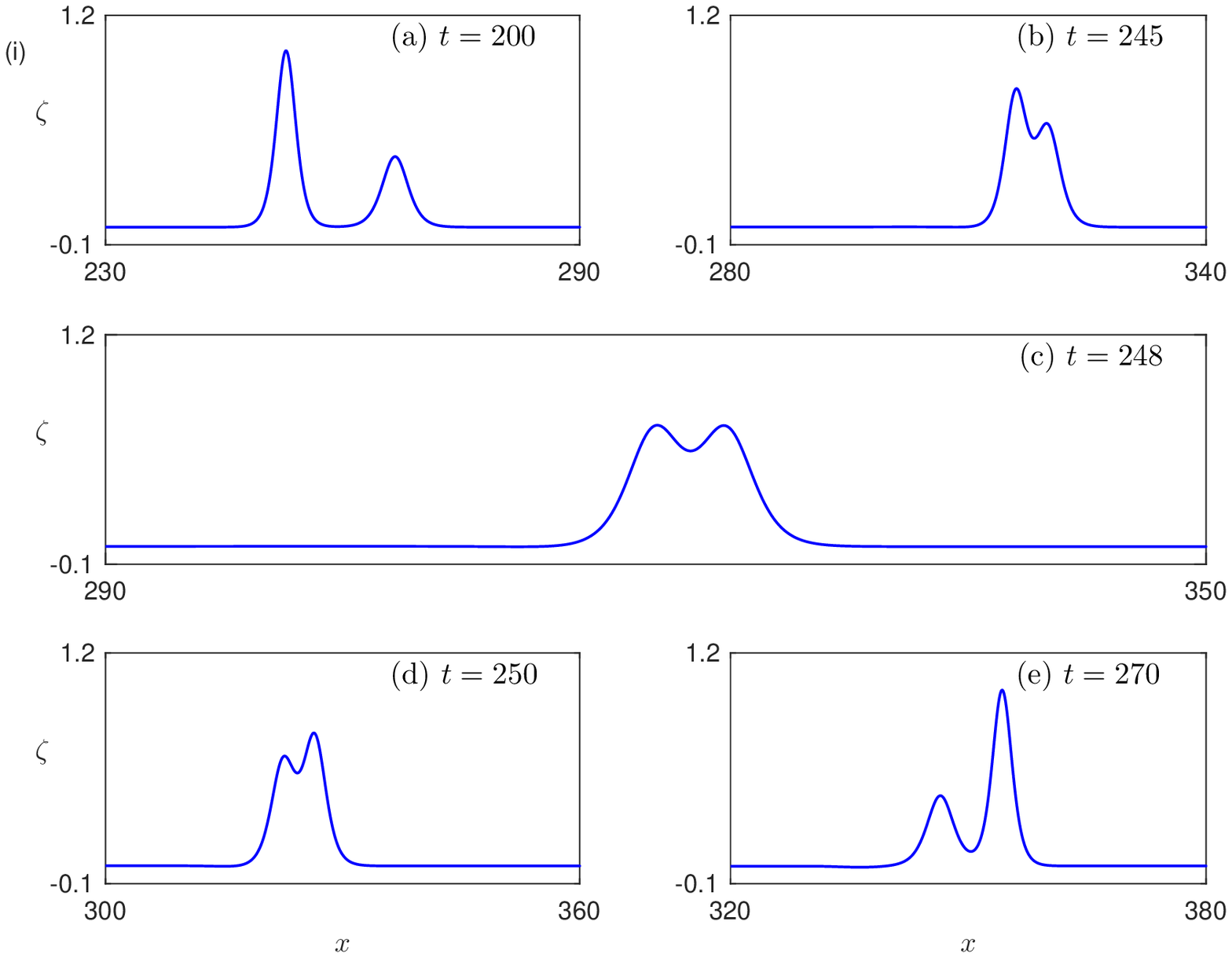} \\ \vspace{0.4cm}
  \includegraphics[scale=0.67]{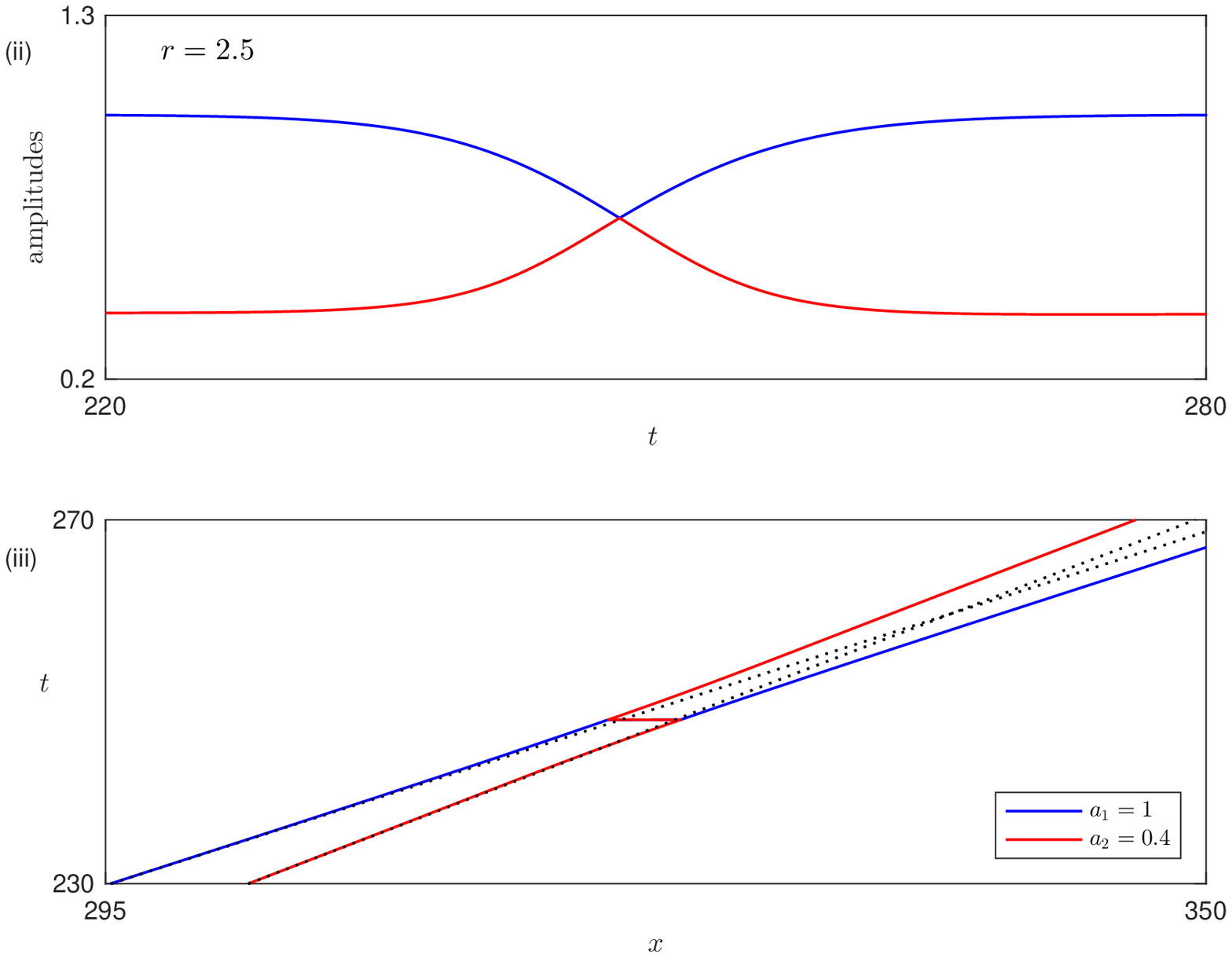}
  \caption{Overtaking collision, $r=2.5$ (Lax case (a)).
  (i): $\zeta$-profiles of the solution at various temporal instances, 
  (ii): Peak amplitudes of $\zeta$ as functions of $t$, (iii): Location of the peaks in the $x$,$t$-plane.}
  \label{fig34}
\end{figure}
place of the smaller there emerges a solitary wave closely resembling the larger one and vice versa.
During this interaction two distinct maxima of the solution exist at all times. Therefore, this type of 
interaction is analogous to the overtaking collision of two solitons of the KdV equation classified by Lax
in Lemma 2.3 of \cite{Lx} as case (a). In Figure \ref{fig34} we show an example of this type of collision
with $r=2.5$. The $\zeta$-profiles of the solution are depicted as functions of $x$ at several temporal
instances close to the interaction in Figure \ref{fig34}(i). In Figure \ref{fig34}(ii) we show the peak 
amplitudes of the two waves as functions of time. (In order to compute a peak amplitude with high accuracy
at each $t^{n}$, we locate a node $x_{i}$ where the fully discrete approximation $\zeta(x,t^{n})$ achieves
a local discrete maximum, solve $\zeta_{x}(x,t^{n})=0$ in  the vicinity of $x_{i}$ by Newton's method, and
find the peak value and its location $x^{*}$. For $r\leq 3.0967$ the code was able to find two distinct
maxima for all $t$.) After the interaction the larger solitary wave suffers a slight loss of amplitude.
while the smaller one gains a small amount. This is shown in the 
\begin{table}[ht]
\centering \footnotesize
\begin{tabular}{|c|c|c|c|c|c|} \hline  
Lax case & $r$   & $a_{1}$ & $a_{1}$ after & $a_{2}$ & $a_{2}$ after \\ \hline 
(a)      & 2.5   & 1       & 0.99966       & 0.4     & 0.40051       \\  
(b)      & 3.125 & 1       & 0.99965       & 0.32    & 0.32065        \\   
(c)      & 5.0   & 1       & 0.99976        & 0.2     & 0.20066       \\ \hline
\end{tabular}
\caption{Amplitudes of the solitary waves before and after the interaction 
($a_{1}=$ amplitude of the large s.w., $a_{2}=$ amplitude of the small s.w.).}  
\label{tbl35} 
\end{table} 
first line of Table \ref{tbl35}, where the values `$a_{1}$ after', `$a_{2}$ after' are the 
stabilized amplitudes 
of the large and small solitary wave, respectively, well after the interaction. \par 
Figure \ref{fig34}(iii) shows the paths of the locations of the peaks $x^{*}$ of the solitary 
waves in the $x,t$-plane. During the interaction the smaller (slower) solitary wave undergoes a
backward phase shift while the larger (faster) wave is displaced forward by a smaller amount. (The phase
shifts are measured relative to the positions of the peaks in the absence of interaction, indicated by the
dotted lines in Fig \ref{fig34}(iii).) The absolute values of the phase shifts ($\Delta x^{*}$) are recorded,
at the indicated temporal values, in Table \ref{tbl36}. 
\begin{table}[ht]
\centering \footnotesize
\begin{tabular}{|c|c|c|c|c|} \hline  
Lax case & $r$   & small s.w. & large s.w. & $t$      \\ \hline 
(a)      & 2.5   & 3.1        & 2.4        & 270      \\  
(b)      & 3.125 & 2.6        & 2.1        & 230      \\   
(c)      & 5.0   & 2.1        & 1.4        & 230      \\ \hline
\end{tabular}
\caption{Absolute values of phase shifts of the small and the large solitary waves measured at the 
indicated times.}  
\label{tbl36} 
\end{table}  
For values of $r\geq 3.9784$ (the values of $r$ where a transition in type occurs 
have been determined 
to four decimals) the interaction resembles the one shown in Figure \ref{fig35}, which
corresponds to $r=5$. The large solitary wave covers the small one completely, for a while there is only
one pulse visible, and finally the small solitary wave is re-emitted from the backside of the large one. 
Hence this type of interaction is analogous to that of case (c) of Lax's Lemma 2.3 for the KdV. As may
be seen in Figures \ref{fig35}(ii) and (iii) the code was able to detect two distinct local maxima initially
for $t\leq 172.50$ and again for $t\geq 192.47$. The stabilized amplitudes after the interaction and the
phase shifts at $t=230$ are given in Tables \ref{tbl35} and \ref{tbl36}, respectively. They resemble 
qualitatively those of case (a). 
\begin{figure}[t!]
  \centering
  \vspace{0.4cm}
  \includegraphics[scale=0.67]{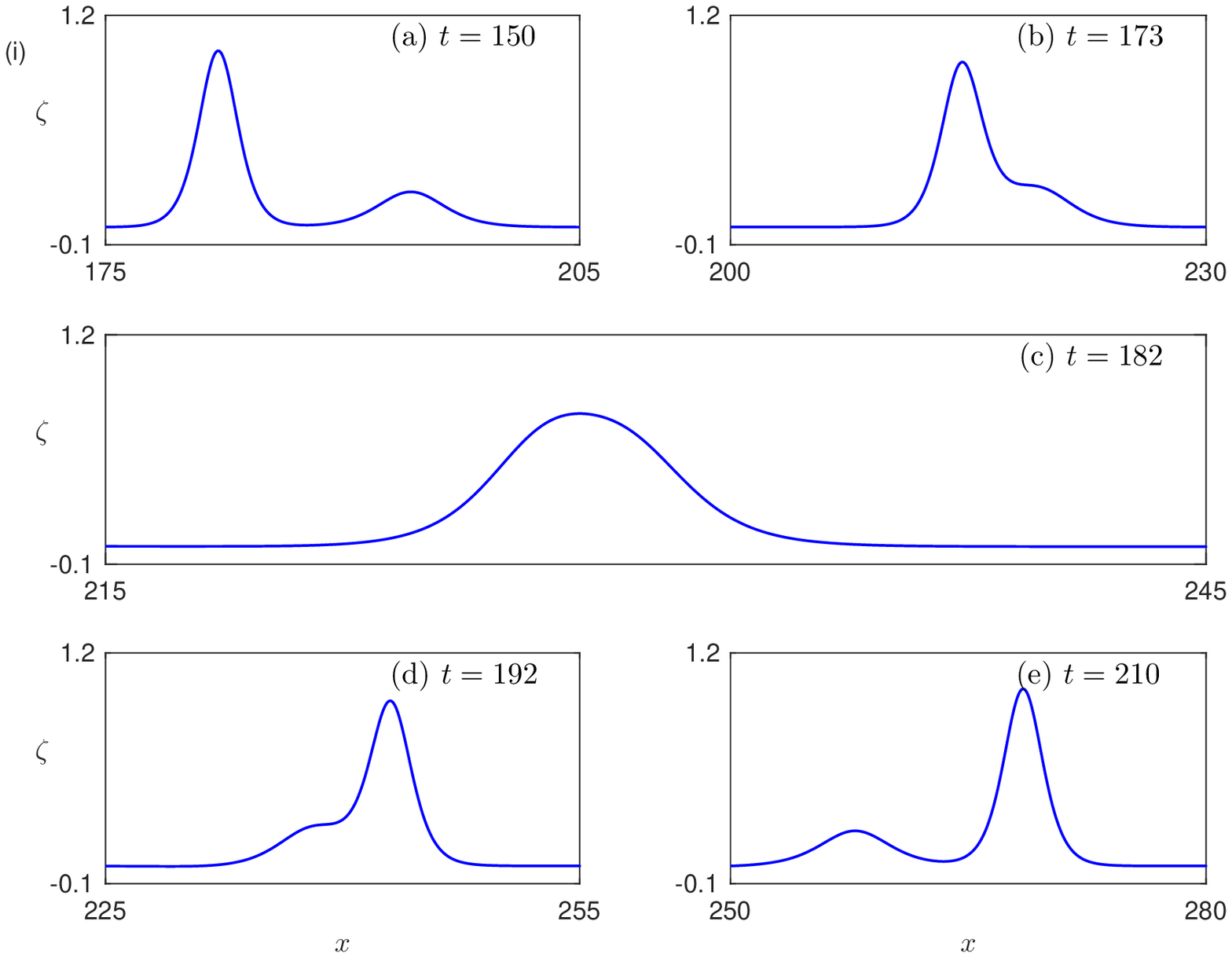}\\ \vspace{0.4cm}
  \includegraphics[scale=0.67]{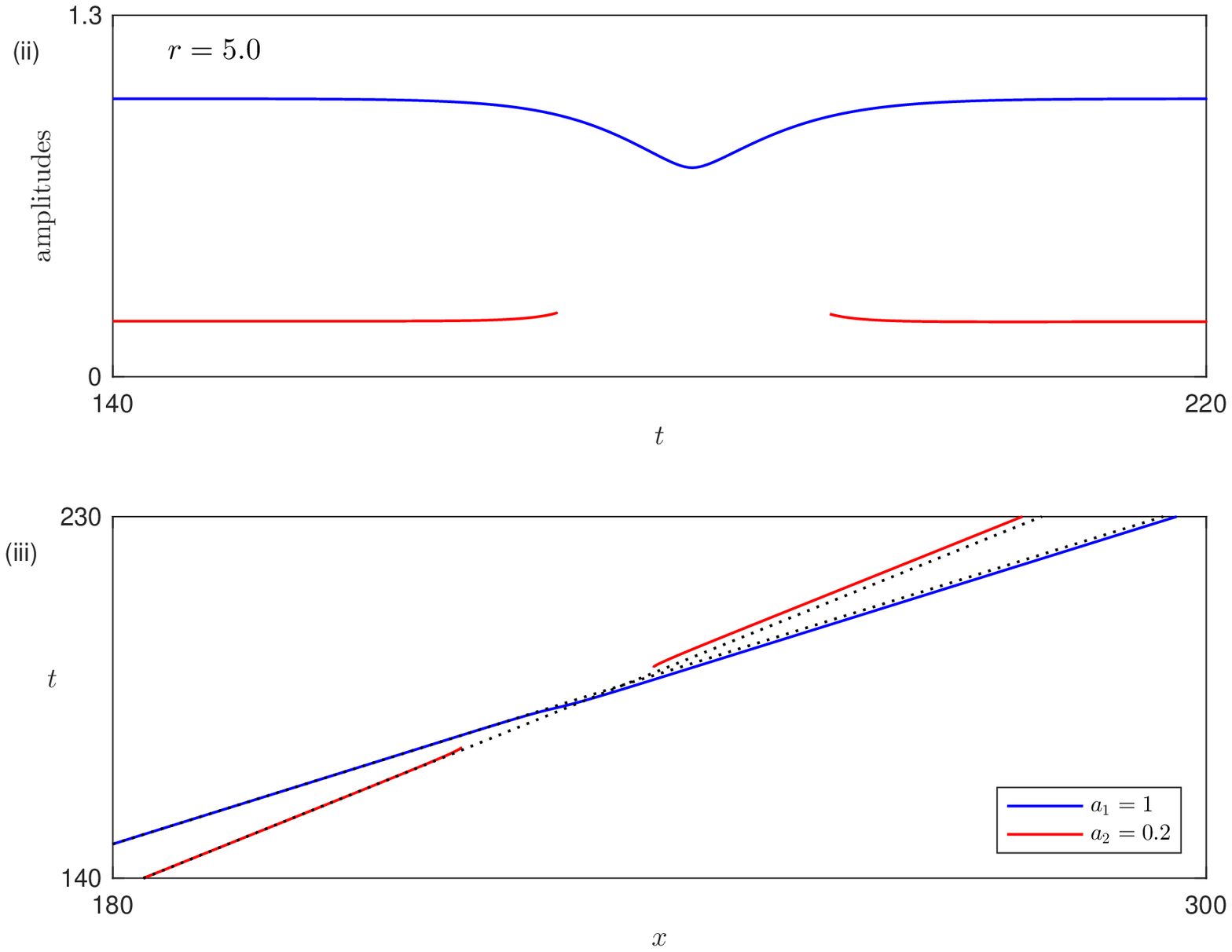}
  \caption{Overtaking collision, $r=5$ (Lax case (c)). (i): $\zeta$-profiles of the solution at various
  temporal instances, (ii): Peak amplitudes of $\zeta$ as functions of $t$, (iii): Location of peaks in the
  $x$,$t$-plane.}
  \label{fig35}
\end{figure}
\par 
In the range $3.1088\leq r\leq 3.9783$, the interaction looks like the one shown in Figure \ref{fig36},
which corresponds to the value $r=3.125$. 
 \begin{figure}[h!]
  \centering
  \vspace{0.4cm}
  \includegraphics[scale=0.67]{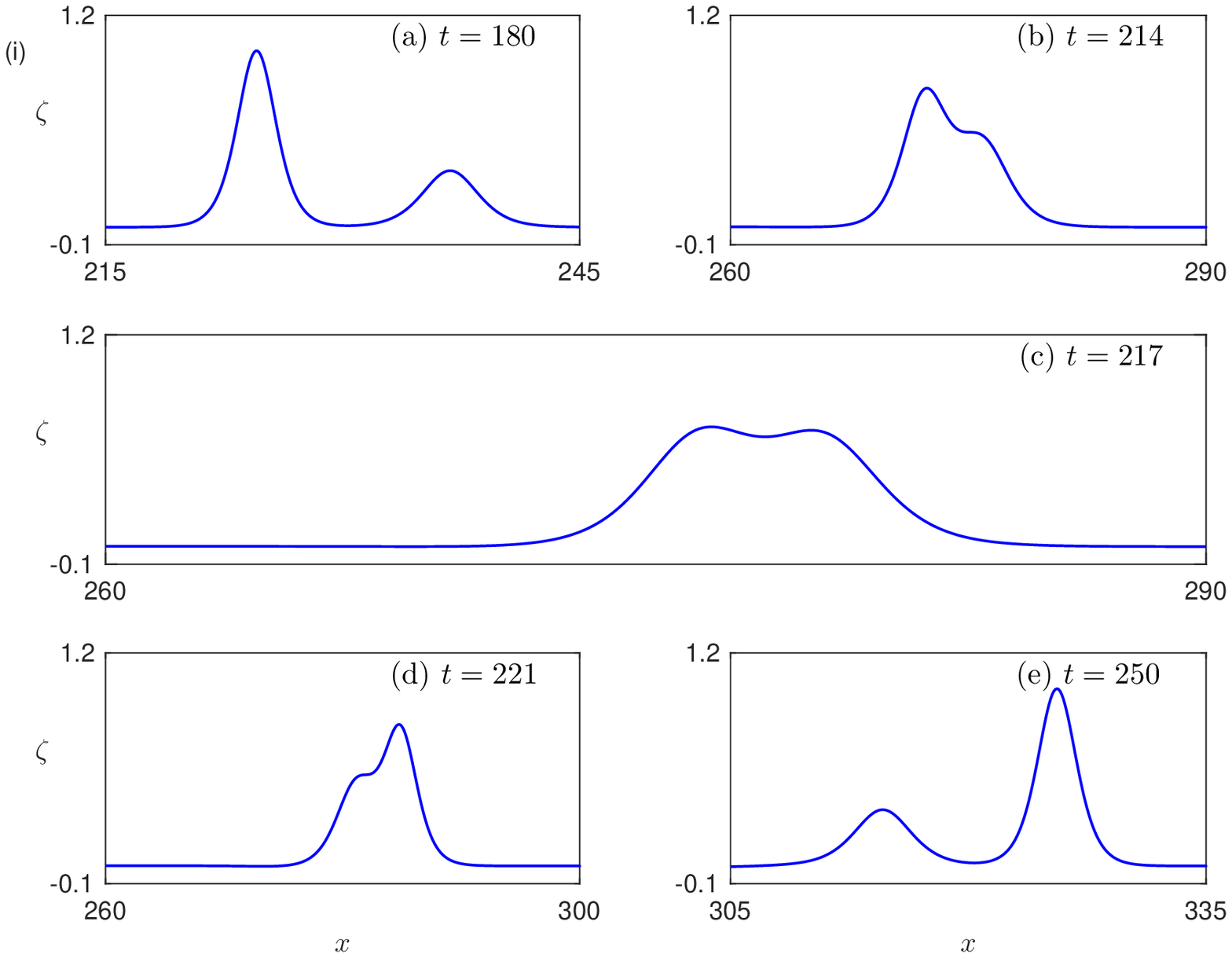}\\ \vspace{0.4cm}
  \includegraphics[scale=0.67]{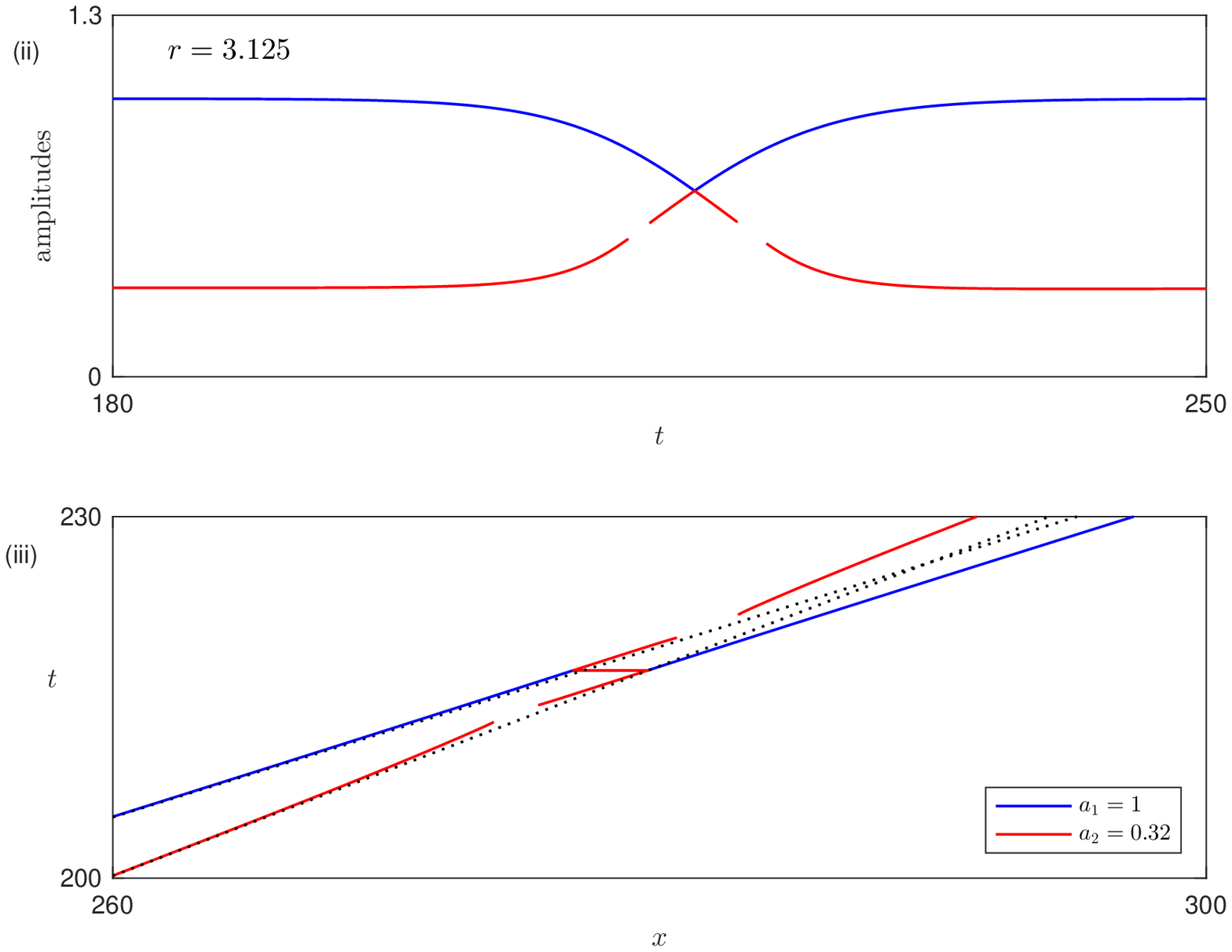}
  \caption{Overtaking collision, $r=3.125$ (Lax case (b)). (i): $\zeta$-profiles of the solution at
  various temporal instances, (ii): Peak amplitudes of $\zeta$ as function of $t$, 
  (iii): Location of the peaks in the $x$,$t$-plane.}
  \label{fig36}
\end{figure}Initially, the large solitary wave covers the small one and for 
a small temporal interval only one local maximum is observed. Subsequently, the smaller pulse reappears
and grows in amplitude, while the amplitude of the larger one diminishes. 
(During this phase two distinct local maxima exist.)
 After the two amplitudes take momentarily equal values, the process is repeated 
in reverse: 
The two waves exchange positions, the small one is absorbed for a small 
temporal interval by the
large one and is finally re-emitted as a separate small solitary wave from the backside 
of the larger one. Therefore this type of interaction, 
intermediate between the two previous types, is analogous to that of case
(b) in Lax's lemma, 
valid for the KdV. The event may be observed more clearly in the evolution of
the peak amplitudes shown in Fig. \ref{fig36}(ii). The code was able to detect two distinct local maxima 
up to $t=212.96$ and only one maximum for $t\in [212.97,214.35]$ when the smaller solitary wave is
absorbed by the pursuing larger wave. Subsequently, two distinct peaks are recorded again until
$t=219.95$ when the smaller wave is absorbed again by the larger one. The code was not able to detect
a second peak for $t\in[219.96,221.84]$. After this temporal interval two local maxima reappeared. The
quantitative scattering data for this interaction (amplitudes and phase shifts) are given in the middle
lines of Tables \ref{tbl35} and \ref{tbl36}, respectively; they resemble qualitatively the corresponding 
values of cases (a) and (c). \par 
It should be noted that whereas the signs of the phase shifts of the emerging solitary waves (the larger
wave is pushed forward while the smaller one is delayed) are the same as the ones observed in the case 
of the Euler equations in \cite{CGHHS} (and also in the case of the CB, \cite{AD1}), Table \ref{tbl35} 
shows that in all cases after the interaction the larger wave diminishes slightly in amplitude while
 \begin{figure}[b!]
  \centering
  \vspace{0.2cm}
  \includegraphics[scale=0.65]{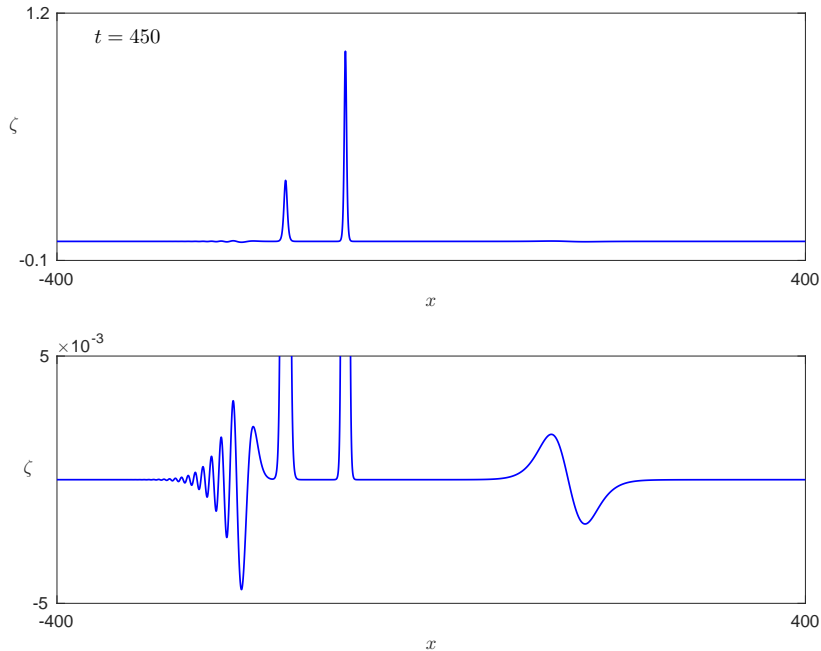}
  \caption{The dispersive tail and the wavelet in the case $r=3.125$.}
  \label{fig37}
\end{figure} 
similar to the Serre case.) \par 
As in the case of the Boussinesq systems and of the Euler equations, the interaction is 
{\em inelastic}. 
In all cases we observed that after the interaction two kinds of dispersive small-amplitude residuals were generated: A dispersive tail of small-wavelength, decaying in amplitude oscillations that travel to
the right trailing the 
solitary waves, and a single $N$-shaped, small-amplitude wavelet of large
wavelength that 
travels to the left. These are illustrated in Figure \ref{fig37} that shows the $\zeta$-profile of the solution 
in the case $r=3.125$ at $t=450$ and its magnification in the direction of the $\zeta$-axis.
(The solution has been translated periodically so that the solitary waves and the residuals appear near
the center of the figure.) The dispersive tail and the wavelet are of $O(10^{-3})$  in amplitude. This
structure of the residual was qualitatively the same for all values of $r$ that we tried in the 
various Lax cases. It should be noted that it resembles the residual of overtaking collisions observed
in the case of various Boussinesq systems, \cite{ADM1}, \cite{AD1}, but perhaps not the residual appearing
in numerical simulations of similar interactions in the case of the Euler equations; cf. e.g. Figure 19 
of \cite{CGHHS}, where only the wavelet seems to have been produced. \par 
Let us also point out that the intervals of $r$ in which the overtaking collisions resemble those of the
Lax cases (a), (b), and (c) depend on the value of, say, $a_{1}$ as well, i.e. the type of interaction
does not depend solely on $r$ as in the case of KdV. For example, when we took $a_{1}=1.5$, we found
that the interactions were of type (a) for $r=a_{1}/a_{2}$ less than $3.372$, of type (c) for $r>4.545$,
and of type (b) for values of $r$ in between. This is akin to what was observed for the Euler equations in
\cite{CGHHS}.       
As one final note of interest, one may observe that the temporal intervals 
[212.97,214.35] and  $[219.96,221.84]$, \normalsize
during  which there is apparently only one peak in the interaction of type (b) shown in 
\begin{figure}[ht!]
  \centering
  \vspace{0.4cm}
  \includegraphics[scale=0.67]{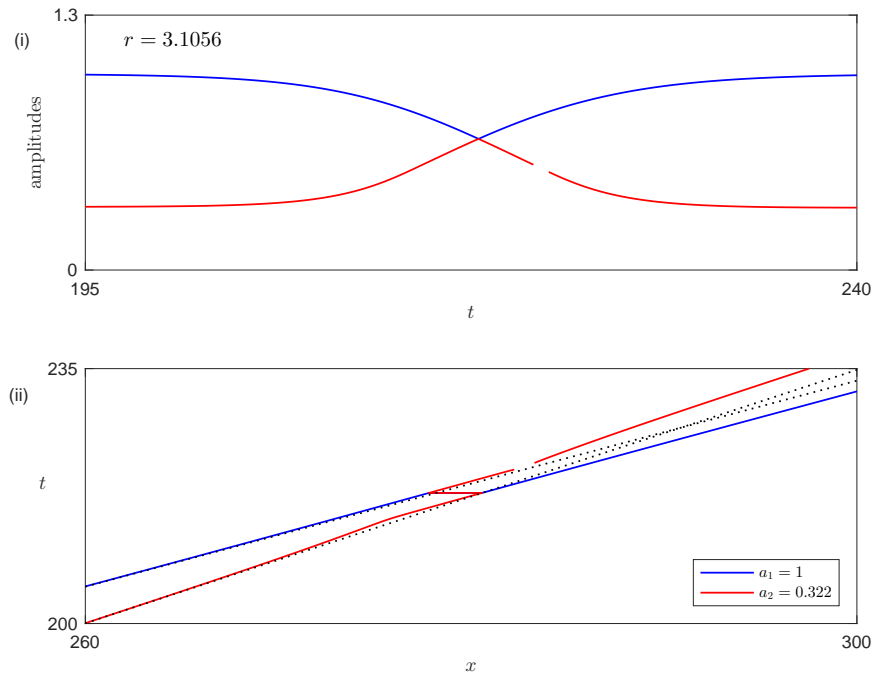}
  \caption{(i): Peak amplitudes of $\zeta$ as functions of $t$, (ii): Locations of the peaks in the 
  $x$,$t$-plane. Intermediate case between (a) and (b), $r=3.1056$.}
  \label{fig38}
\end{figure} 
Figure \ref{fig36}(ii) in the case $a_{1}=1$, $r=3.125$, are of unequal duration, 
the first one being smaller than the second. 
This led us to investigate whether these could  be values of $r$ for which the
first interval disappears but the second does not. We found that for 
$3.0968\leq r\leq 3.1087$ this was indeed the case, illustrated for $r=3.1056$ in 
the first one being smaller than the second. 
This led us to investigate whether these could  be values of $r$ for which the
first interval disappears but the second does not. We found that for 
$3.0968\leq r\leq 3.1087$ this was indeed the case, illustrated for $r=3.1056$ in 
Figure \ref{fig38}. In this type of interaction initially there are apparently two 
distinct local maxima that exchange heights and then the smaller peak is absorbed by
the larger one to reemerge later at the back of the large wave. Thus this interaction 
appears to be of an intermediate (transitional) type between cases (a) and (b) and we 
label it accordingly as case (ab).
\par 
In conclusion, Table \ref{tbl37} shows the intervals of $r$ (for $a_{1}=1$) 
and the corresponding type of 
interaction that we observed.
In the case (a) our code was always able to find two distinct local 
\begin{table}[ht] 
\centering \footnotesize
\begin{tabular}{|c|c|c|c|c|} \hline  
 & & transitional &          &                 \\ 
Cases & (a)  & (ab) &  (b) & (c)              \\ \hline 
\lower.2ex\hbox{$r$} & \lower.2ex\hbox{$(1,3.0967]$} & \lower.2ex\hbox{$[3.0968,3.1087]$} &  
\lower.2ex\hbox{$[3.1088,3.9783]$} & \lower.2ex\hbox{$[3.9784,+\infty)$}  \\ \hline 
\end{tabular}
\caption{Overtaking collisions, Serre equations, $a_{1}=1$, $r=a_{1}/a_{2}$. Lax cases and corresponding
intervals of $r$.} 
\label{tbl37} 
\end{table} 
maxima. In the transitional case (ab) there was only one temporal interval in which a unique local 
maximum was found, while in case (b) two such disjoint intervals were detected. These two intervals merge 
into a single larger interval in case (c).

\end{document}